\theoremstyle{plain} % definition 
\newtheorem{lemma}[equation]{Lemma} 
\newtheorem{proposition}[equation]{Proposition} 
\newtheorem{theorem}[equation]{Theorem} 
\newtheorem{corollary}[equation]{Corollary} 
\newtheorem{conjecture}[equation]{Conjecture}
\theoremstyle{definition}
\newtheorem{definition}[equation]{Definition} 
\theoremstyle{remark}
\newtheorem*{ack}{Acknowledgment}
\numberwithin{equation}{section}
\def\norm#1.#2.{\lVert#1\rVert_{#2}}
\def\Norm#1.#2.{\bigl\lVert#1\bigr\rVert_{#2}}
\def\NOrm#1.#2.{\Bigl\lVert#1\Bigr\rVert_{#2}}
\def\NORm#1.#2.{\biggl\lVert#1\biggr\rVert_{#2}}
\def\NORM#1.#2.{\Biggl\lVert#1\Biggr\rVert_{#2}}
\def\ip#1,#2,{\langle #1,#2\rangle}
\def\Ip#1,#2,{\langle#1,#2\rangle}
\def\IP#1,#2,{\langle#1,#2\rangle}
\def\Abs#1{\bigl\lvert#1\bigr\rvert}
\def\ABs#1{\Bigl\lvert#1\Bigr\rvert}
\def\XXint#1#2#3{{\setbox0=\hbox{$#1{#2#3}{\int}$}
     \vcenter{\hbox{$#2#3$}}\kern-.5\wd0}}
\title {Sharp $ A_2$ Inequality for Haar Shift Operators}
\author[M.T. Lacey]{Michael T. Lacey}   %  can use \and  
\thanks{Research supported in part by NSF grant 0456611}
\address{ School of Mathematics, Georgia Institute of Technology, Atlanta GA 30332, USA}
\email {lacey@math.gatech.edu}
\author[S. Petermichl]{Stefanie Petermichl}
\address{Department of Mathematics, Universite de Bordeaux 1, France }
\email{stefanie@math.u-bordeaux1.fr}
\author[M.C. Reguera]{Maria Carmen Reguera}
\thanks{Research supported in part by NSF grant 0456611}
\address{ School of Mathematics, Georgia Institute of Technology, Atlanta GA 30332, USA}
\email {mreguera@math.gatech.edu}
\begin{document}

\begin{abstract}
As a Corollary to the main result of the paper we give a new proof of the inequality 
\begin{equation*}
\norm \operatorname T f. L ^{2} (w). \lesssim \norm w. A_2. \norm f. L ^{2} (w). \,, 
\end{equation*}
where $ \operatorname T$ is either the Hilbert transform \cite{MR2354322}, a Riesz transform \cite{MR2367098}, or the 
Beurling operator \cite{MR1894362}.  The weight $ w$ is non-negative, and the linear growth in the $ A_2$ characteristic 
on the right is sharp.   Prior proofs relied strongly on Haar shift operators \cite{MR1756958} and Bellman function 
techniques.  The new proof uses Haar shifts, and then uses an elegant `two weight $ \operatorname T1$ theorem' 
of Nazarov-Treil-Volberg \cite{MR2407233} to immediately identify relevant Carleson measure estimates, which are 
in turn verified using an appropriate corona decomposition of the weight $ w$.  
\end{abstract}

\maketitle

%%%%%%%%%%%%%%%%%%%%%%%%%%%%%% SECTION  SECTION SECTION
%%%%%%%%%%%%%%%%%%%%%%%%%%%%%% SECTION  SECTION SECTION 
\section{Introduction} %\label{s.}

We are interested in weighted estimates for singular integral operators, and cognate operators, 
with a focus on sharp estimates in terms of the $ A_p$ characteristic of the weight.  
In particular we give a new proof of the estimate of Petermichl \cite{MR2354322} 
\begin{equation}\label{e.Hilbert}
\norm \operatorname H f . L ^{2} (w). \lesssim \norm w. A_2. \norm f. L^2 (w).  \,, 
\end{equation}
where $ \operatorname H f (x) = \textup{p.v.} \int f (x-y)\; dy/y$ is the Hilbert transform.  
Petermichl's proof, as well as corresponding inequalities for the Beurling operator \cite{MR1894362} 
and the Riesz transforms \cite{MR2367098} have relied upon a Bellman function approach to the estimate 
for the corresponding Haar shift.  We also analyze the Haar shifts, but instead use a 
deep two-weight inequality of Nazarov-Treil-Volberg \cite{MR2407233}  as a way to quickly 
reduce the question to certain Carleson measure estimates.  The latter estimates are proved 
by using the usual Haar functions together with appropriate corona decomposition.  The linear growth 
in terms of the  $ A_2$ characteristic is neatly explained by this decomposition.  

Let us precede to the definitions. 

%%%%%%%%%%%%%%%%%%%%%%%%%%%%%%  DEFINITION DEFINITION DEFINITION
\begin{definition}\label{d.Ap}  For $ w$ a positive function (a weight) on $ \mathbb R ^{d}$ we define 
the $ A_p$ characteristic of $ w$ to be 
\begin{equation*}
\norm w. A_p. \coloneqq \sup _{Q}  \lvert  Q\rvert ^{-1}  \int _{Q} w \; dx  \cdot \Bigl[ 
\lvert  Q\rvert ^{-1} \int _{Q} w ^{-1/(p-1)} \; dx\Bigr] ^{p-1} \,, \qquad 1<p<\infty \,, 
\end{equation*}
where the supremum is over all cubes in $ \mathbb R ^{d}$. 
\end{definition}
%%%%%%%%%%%%%%%%%%%%%%%%%%%%%%  DEFINITION DEFINITION DEFINITION

The relevant conjecture concerning the behavior of singular integral operators on the spaces $ L ^{p} (w)$ is 

%%%%%%%%%%%%%%%%%%%%%%%%%%%%%% CONJECTURE CONJECTURE CONJECTURE
\begin{conjecture}\label{j.Ap}  For a smooth singular integral operator $ \operatorname T  $ which is bounded on $ L
^{2} (dx)$ we have the estimate  
\begin{equation} \label{e.TAp}
\norm \operatorname T f. L ^{p} (w) . \lesssim \norm w. A_p. ^{\alpha (p)} \norm f. L ^{p} (w).  \,, 
\qquad \alpha (p)=\max \{1,1/(p-1)\} \,. 
\end{equation}
\end{conjecture}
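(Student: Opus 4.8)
The plan is to prove Conjecture~\ref{j.Ap} by reducing it, in three stages, to a statement about dyadic model operators. First, the passage from $p=2$ to general $p$ is soft: the sharp extrapolation theorem of Dragi\v{c}evi\'c, Grafakos, Pereyra and Petermichl shows that any operator obeying $\lVert\operatorname T f\rVert_{L^2(w)}\lesssim\lVert w\rVert_{A_2}\lVert f\rVert_{L^2(w)}$ automatically satisfies \eqref{e.TAp} with the stated exponent $\alpha(p)=\max\{1,1/(p-1)\}$ for every $1<p<\infty$; this is a Rubio de Francia iteration with quantitative control of the constants, and introduces no operator-specific difficulty. It therefore suffices to establish \eqref{e.TAp} for $p=2$, and for this I would combine two ingredients: (i) a sharp $A_2$ bound for Haar shift operators with \emph{controlled dependence on the complexity}; and (ii) a representation of an arbitrary smooth singular integral as a rapidly convergent average of such Haar shifts over random dyadic grids.

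\textbf{Ingredient (i): Haar shifts.} This is the main theorem of the present paper, which I would need in the following quantitative form: for a dyadic grid $\mathcal{D}$ and a Haar shift $\mathbb{S}$ on $\mathcal{D}$ of complexity $\kappa$,
\begin{equation*}
  \lVert\mathbb{S}f\rVert_{L^2(w)}\;\lesssim\;c(\kappa)\,\lVert w\rVert_{A_2}\,\lVert f\rVert_{L^2(w)},
\end{equation*}
with $c(\kappa)$ at most polynomial in $\kappa$ — linear growth $c(\kappa)\lesssim 1+\kappa$ is what the argument should deliver, but any fixed power suffices for Ingredient (ii). The mechanism is to test the two-weight $\operatorname T1$ theorem of Nazarov--Treil--Volberg \cite{MR2407233} on indicators of dyadic cubes; this collapses the bound to a pair of Carleson-measure estimates, one for $w$ and one for its conjugate weight $w^{-1}$, which are in turn verified by running a corona decomposition of $w$. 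The stopping-time structure of the corona produces exactly the single power of $\lVert w\rVert_{A_2}$, and tracking how many corona generations a complexity-$\kappa$ shift can connect yields the polynomial factor $c(\kappa)$; this bookkeeping must be done with care, since Ingredient (ii) sums against it.

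\textbf{Ingredient (ii): dyadic representation.} Given $\operatorname T$ smooth, Calder\'on--Zygmund, and $L^2(dx)$-bounded with a Dini (or H\"older) kernel, I would expand its bilinear form in the Haar basis of a \emph{random} dyadic grid $\mathcal{D}_\omega$,
\begin{equation*}
  \langle\operatorname T f,g\rangle=\sum_{I,J\in\mathcal{D}_\omega}\langle\operatorname T h_I,h_J\rangle\,\langle f,h_I\rangle\,\langle g,h_J\rangle,
\end{equation*}
and organize the sum by the ratio $\ell(I)/\ell(J)$. Following the Nazarov--Treil--Volberg random-grid philosophy, the contribution of the ``bad'' cubes (the smaller one sitting near the boundary of the larger) is negligible in expectation over $\omega$ and is discarded; after a harmless merging of neighbouring grids, the surviving terms with $2^{-\kappa}\le\ell(I)/\ell(J)\le 2^{\kappa}$ assemble, for each fixed $\kappa$, into a Haar shift $\mathbb{S}^\omega_\kappa$ of complexity $\kappa$ whose $L^2(dx)$ operator norm is controlled by $\omega_{\operatorname T}(2^{-\kappa})$, the kernel's modulus of continuity at scale $2^{-\kappa}$. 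Averaging over $\omega$ then gives
\begin{equation*}
  \operatorname T=\mathbb{E}_\omega\sum_{\kappa\ge0}a_\kappa\,\mathbb{S}^\omega_\kappa,\qquad\sum_{\kappa\ge0}\abs{a_\kappa}<\infty,
\end{equation*}
the series converging by the Dini smoothness (geometrically in the H\"older case). Combining with Ingredient (i) and summing the convergent series $\sum_\kappa\abs{a_\kappa}\,c(\kappa)$ yields \eqref{e.TAp} for $p=2$, and the extrapolation step finishes all $p$. Since the only inputs about $\operatorname T$ are its $L^2(dx)$-boundedness and kernel smoothness, this covers all smooth singular integrals, not merely the Hilbert, Riesz and Beurling examples.

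\textbf{Main obstacle.} The crux is Ingredient (ii): realizing a \emph{general} smooth singular integral as a summable average of finite-complexity Haar shifts. For the Hilbert transform this is classical (essentially a single shift, averaged over translations and dilations of the grid), and the Beurling operator and Riesz transforms admit analogues because of their special algebraic structure; but the passage to arbitrary $\operatorname T$ requires a genuinely new argument that computes the random-grid expectations uniformly, handles the non-nested ``adjacent'' cubes, and identifies the complexity-$\kappa$ block with an honest Haar shift of the type covered by Ingredient (i). Tied to this is the demand, inside Ingredient (i), that the $A_2$ constant of a complexity-$\kappa$ shift grow only polynomially in $\kappa$: exponential growth there would make $\sum_\kappa\abs{a_\kappa}\,c(\kappa)$ diverge and the whole reduction collapse. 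These two points — the uniform random-grid representation and the polynomial complexity bound — are where the real work lies.
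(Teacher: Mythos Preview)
The statement you are addressing is labeled \emph{Conjecture} in the paper, and the paper does not prove it. What the paper establishes is Theorem~\ref{t.a2} (the linear $A_2$ bound for Haar shift operators of a fixed index $\tau$) and, as a consequence, Corollary~\ref{c.a2}, which is \eqref{e.TAp} only for the Hilbert transform, the Riesz transforms, and the Beurling operator. These three operators are handled because each is already known to be an average of Haar shifts of a \emph{single fixed} index (\cite{MR1756958}, \cite{MR1964822}, \cite{MR2140200}); no representation of a general smooth Calder\'on--Zygmund operator is claimed or used. So there is no ``paper's own proof'' of Conjecture~\ref{j.Ap} to compare against; the right comparison is between your proposal and what the paper actually does for the special cases.

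Your proposal is a coherent program --- indeed it is essentially the strategy by which the full $A_2$ conjecture was later resolved --- but relative to the present paper it has two genuine gaps, and you have correctly flagged both of them in your ``Main obstacle'' paragraph. First, Ingredient~(ii), the representation of an arbitrary smooth singular integral as a rapidly convergent average of finite-complexity Haar shifts over random grids, is not available in this paper and was open when it was written; the paper relies only on the pre-existing special-case representations. Second, and more damaging for your plan even granting Ingredient~(ii), your Ingredient~(i) requires $c(\kappa)$ to be at worst polynomial in the complexity, whereas the paper explicitly declines to track this dependence: at the start of \S\ref{s.main} it states ``We no longer try to keep track of the dependence on $\tau$ in our estimates. (It is, in any case, exponential in $\tau$.)'' With exponential $c(\kappa)$ the series $\sum_\kappa |a_\kappa|\,c(\kappa)$ diverges even for H\"older kernels, so Theorem~\ref{t.a2} as proved here would not feed into your summation. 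In short, your outline is not wrong, but it requires two substantial results beyond what this paper supplies: a general dyadic representation theorem, and a sharpening of Theorem~\ref{t.a2} to polynomial dependence on the shift index.
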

%%%%%%%%%%%%%%%%%%%%%%%%%%%%%% CONJECTURE CONJECTURE CONJECTURE

An extrapolation estimate \cites{MR1894362,MR2140200} shows that it suffices to prove this estimate 
for $ p=2$, which is the   case we consider in the remainder of this paper.  
Currently this estimate is known for the Hilbert transform, Riesz transforms and the Beurling operator, 
with the proof using in an essential way the so-called Haar shift operators.  
This proof will do so as well, but handle all Haar shifts at the same time.  

%%%%%%%%%%%%%%%%%%%%%%%%%%%%%%  DEFINITION DEFINITION DEFINITION
\begin{definition}\label{d.haar} By a \emph{Haar function} $ h_Q$ on a cube $ Q\subset \mathbb R ^{d}$, we mean 
any function which satisfies 
%%  ITEMIZE    
\begin{enumerate}
\item  $ h_Q$ is a function supported on $ Q$, and is constant on  
dyadic subcubes of $ Q$.
(That is, $ h_Q$ is in the linear span of the indicators of the  `children' of $ Q$.)
\item  $\norm h_Q. \infty . \le \lvert  Q\rvert ^{-1/2}$. (So $ \norm h_Q. 2. \le 1$.)
\item  $ \int _Q h_Q (x)\; dx =0$. 
\end{enumerate}
%% ENUMERATE

\end{definition}
%%%%%%%%%%%%%%%%%%%%%%%%%%%%%%  DEFINITION DEFINITION DEFINITION

%%%%%%%%%%%%%%%%%%%%%%%%%%%%%%  DEFINITION DEFINITION DEFINITION
\begin{definition}\label{d.well} We say that $ \operatorname T$ is a \emph{Haar shift operator 
of index $ \tau  $}  iff 
\begin{gather*}
\operatorname Tf = \sum _{Q\in \mathscr Q} 
\sum _{ \substack{Q',Q''\subset Q\\  2 ^{-\tau  d} \lvert  Q\rvert \le \lvert  Q'\rvert, \lvert  Q''\rvert   }} 
a _{Q',Q''} \ip f, h_{Q'}, h _{Q''} \,, 
\\
\lvert  a _{Q',Q''}\rvert \le \Biggl[ \frac { \lvert  Q'\rvert } {\lvert  Q\rvert } \cdot 
\frac { \lvert  Q''\rvert } {\lvert  Q\rvert }\Biggr] ^{1/2} \,. 
\end{gather*}
\end{definition}
%%%%%%%%%%%%%%%%%%%%%%%%%%%%%%  DEFINITION DEFINITION DEFINITION

The point of the conditions in the definition is that $ \operatorname T$ be not only an $ L ^{2}(dx)$ 
bounded operator, but that it also be a Calder\'on-Zygmund operator.  In particular, it should admit 
a weak-$ L ^{1} (dx)$ bound that depends only on the index $ \tau  $.  See Proposition~\ref{p.weakL1}.

%%%%%%%%%%%%%%%%%%%%%%%%%%%%%% THEOREM THEOREM THEOREM
\begin{theorem}\label{t.a2}  Let $ \operatorname T$ be a  Haar shift operator of index $ \tau $, and 
let $ w$ be an $ A_2$ weight.  We have the inequality 
\begin{equation}  \label{e.a2}
\norm \operatorname T . L ^{2} (w ) \mapsto L ^{2} (w).  \lesssim \norm w . A_2. 
\end{equation}
The implied constant depends only dimension $ d$ and the index $ \tau $ of the operator. 
\end{theorem}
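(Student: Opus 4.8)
The plan is to recast the one-weight bound \eqref{e.a2} as a two-weight inequality, apply the two-weight $T1$ theorem of Nazarov--Treil--Volberg for Haar shifts to isolate the Carleson measure estimates it really depends on, and verify those estimates by a corona decomposition of $w$.

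First I would dualize. Put $\sigma\coloneqq w^{-1}$. By Definition~\ref{d.Ap} with $p=2$ one has $\norm\sigma. A_2.=\norm w. A_2.$ and the ``box'' inequality $\langle w\rangle_Q\langle\sigma\rangle_Q\le\norm w. A_2.$ for every cube $Q$, writing $\langle\phi\rangle_Q\coloneqq\abs Q^{-1}\int_Q\phi\,dx$. The substitution $f=\sigma g$ satisfies $\norm{\sigma g}. L^2(w).=\norm g. L^2(\sigma).$, so \eqref{e.a2} is equivalent to the two-weight bound $\norm{\operatorname T(\sigma g)}. L^2(w).\lesssim\norm w. A_2.\,\norm g. L^2(\sigma).$. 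Since $\operatorname T$ is a Haar shift operator of finite index $\tau$, it is of the ``well localized'' type to which the two-weight $T1$ theorem of Nazarov--Treil--Volberg applies: the operator norm of $\operatorname T(\sigma\,\cdot)\colon L^2(\sigma)\to L^2(w)$ is controlled by the box quantity $\sup_Q(\langle w\rangle_Q\langle\sigma\rangle_Q)^{1/2}\le\norm w. A_2.^{1/2}$ together with the best constants in two dual Carleson measure estimates, of the schematic form
\[
\sum_{R\subseteq R_0} a_R \ \lesssim\ \norm w. A_2.^{2}\,\nu(R_0)\qquad (\nu\in\{\sigma,w\}),
\]
where the nonnegative coefficients $a_R$ are built from the local $A_2$ data $\langle w\rangle_R,\langle\sigma\rangle_R$ and from the Haar coefficients $\ip w,h_R,$, $\ip \sigma,h_R,$ of the two weights. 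Since $\operatorname T^\ast$ is again a Haar shift of index $\tau$ and the two estimates are interchanged by swapping $w\leftrightarrow\sigma$, it suffices to prove one of them.

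To prove the Carleson estimate I would fix $R_0$ and run a corona (stopping time) decomposition adapted to $w$: stop at the maximal dyadic cubes where the average $\langle w\rangle$ first leaves the band $[\tfrac12,2]\cdot\langle w\rangle_F$, with $F$ the current stopping ancestor, and iterate, obtaining a stopping family $\mathscr F\subseteq\{R:R\subseteq R_0\}$. This family is sparse, and on each corona (the cubes lying between a stopping cube $F$ and its stopping children) the average of $w$ is comparable to its value on $F$; hence, by the box inequality, so is the average of $\sigma$ up to the factor $\norm w. A_2.$, while $\langle w\rangle_F\langle\sigma\rangle_F\le\norm w. A_2.$. On a single corona one may therefore freeze these weight averages and bound that corona's share of the Carleson sum by a classical, \emph{unweighted}, Haar square-function (Carleson embedding) estimate written with the usual Haar functions of Definition~\ref{d.haar}. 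Summing over $\mathscr F$, the geometric gain from sparseness is matched with exactly one use of $\langle w\rangle_F\langle\sigma\rangle_F\le\norm w. A_2.$ per corona; that single expenditure of the $A_2$ characteristic is the mechanism turning the estimate into one that is linear --- and sharp --- in $\norm w. A_2.$, as opposed to the quadratic dependence produced by coarser weighted inequalities.

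The routine parts of this program are the two-weight reformulation, the check that a Haar shift of index $\tau$ meets the hypotheses of the Nazarov--Treil--Volberg theorem, and the reduction to the Carleson estimates. The step I expect to be the main obstacle is the execution of the corona argument: one must identify the Carleson sequences $\{a_R\}$ explicitly --- they naturally involve $L^2(\sigma)$-averages rather than Lebesgue averages, so a genuinely \emph{weighted} Carleson embedding theorem is needed --- establish the almost-orthogonality needed to add the contributions of distinct coronas without loss, and organize the stopping data so that $\norm w. A_2.$ is charged exactly once per corona while the right-hand side comes out with the correct mass $\nu(R_0)$ and the sharp power $\norm w. A_2.^{2}$. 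The exact form of Definition~\ref{d.Ap}, the $L^1(dx)$-normalisation in Definition~\ref{d.haar}, and the geometric decay across corona generations all enter here in an essential way.
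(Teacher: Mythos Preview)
Your overall plan---recast \eqref{e.a2} as the two-weight bound for the pair $(\sigma,\mu)=(w^{-1},w)$, invoke the Nazarov--Treil--Volberg $T1$ theorem for well-localized operators, and then verify the testing conditions via a corona decomposition of $w$---is exactly the strategy of the paper. The reduction steps you call ``routine'' are indeed routine and are carried out in \S\ref{s.initial}--\S\ref{s.main}.

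Where your proposal diverges, and where the real gap lies, is in the sentence ``On a single corona one may therefore freeze these weight averages and bound that corona's share of the Carleson sum by a classical, unweighted, Haar square-function (Carleson embedding) estimate.'' This is precisely the heart of the proof, and it does not reduce to an unweighted square-function estimate. The quantity to be controlled on a corona $L$ is $H(L)=\sum_{Q}\langle w,g_Q\rangle\,\gamma_Q$, a \emph{signed} sum of non-orthogonal terms; freezing $\langle w\rangle_Q$ only gives the trivial size bound $|\langle w,g_Q\rangle\gamma_Q|\lesssim \langle w\rangle_L$ term-by-term, which diverges when summed. The paper handles this via Lemma~\ref{l.essence}: one uses the \emph{weak-$L^1(dx)$} bound for the Haar shift (Proposition~\ref{p.weakL1}) together with the John--Nirenberg mechanism (Lemma~\ref{l.JN}) to promote the weak-type bound to an exponential distributional estimate for $H(L)$, in both Lebesgue and $w^{-1}$ measure. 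Neither ingredient appears in your outline.

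A second, related discrepancy: your single two-sided corona (stop when $\langle w\rangle$ leaves $[\tfrac12,2]\cdot\langle w\rangle_F$) freezes $\langle w\rangle$ but only bounds $\langle\sigma\rangle$ \emph{from above} via the box inequality; it does not freeze $\langle\sigma\rangle$, which is what one needs to transfer the Lebesgue-measure distributional estimate of Lemma~\ref{l.essence} to $w^{-1}$-measure. The paper achieves this by a more elaborate three-tier decomposition: first the collections $\mathscr Q_n$ in \eqref{e.QnDef} freezing the local $A_2$ product $\langle w\rangle_Q\langle\sigma\rangle_Q\sim 2^n$; then a one-sided corona in $w$ (Definition~\ref{d.linearizing}); and finally the sub-collections $\mathscr P_{n,\alpha}(L)$ in \eqref{e.al} freezing $\langle w\rangle_Q$. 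Only on these smallest pieces are \emph{both} averages pinned down, so that $w^{-1}(Q')\simeq\rho\,|Q'|$ for a fixed $\rho$ and the distributional inequality can be moved from $dx$ to $w^{-1}\,dx$. Also note that your two-sided stopping family is not automatically sparse in Lebesgue measure (the ``downward'' stopping children can cover most of $F$); the paper's one-sided corona is Lebesgue-sparse by \eqref{e.11/4}, and its $w$-Carleson property is established separately in Lemma~\ref{l.carleson} with one factor of $\norm w.A_2.$---this, together with the bound $2^n\le\norm w.A_2.$, is where the linear dependence arises.
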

%%%%%%%%%%%%%%%%%%%%%%%%%%%%%% THEOREM THEOREM THEOREM

We have  this Corollary:  

%%%%%%%%%%%%%%%%%%%%%%%%%%%%%% COROLLARY COROLLARY COROLLARY
\begin{corollary}\label{c.a2}  The inequalities \eqref{e.TAp} holds for the Hilbert transform, 
the Riesz transforms in any dimension $ d$, and the Beurling operator on the plane.  
\end{corollary}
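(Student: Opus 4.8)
The plan is to deduce the Corollary from Theorem~\ref{t.a2} by recalling that each of the Hilbert transform, the Riesz transforms, and the planar Beurling operator can be written as an average of Haar shift operators of a fixed finite index, and then to pass from the case $ p = 2 $ to general $ p $ by extrapolation.  No new analysis is needed beyond Theorem~\ref{t.a2}; everything below is a matter of assembling known ingredients.

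First I would treat the Hilbert transform.  By Petermichl's representation \cite{MR2354322}, there is a dyadic shift operator — for a fixed dyadic grid $ \mathscr D $ it is $ \operatorname S _{\mathscr D} f = \sum _{I \in \mathscr D} \langle f, h_I \rangle\, \tfrac{1}{\sqrt2}\, (h _{I _{-}} - h _{I _{+}}) $, where $ I _{\pm} $ are the two children of $ I $ — such that $ \operatorname H $ is a nonzero constant times the average of the operators $ \operatorname S _{\mathscr D} $ as $ \mathscr D $ ranges over all dilated and translated dyadic grids, taken against the natural translation- and dilation-invariant probability measure on the space of such grids.  Each $ \operatorname S _{\mathscr D} $ is a Haar shift operator of index $ \tau = 1 $ in the sense of Definition~\ref{d.well}: one takes $ Q = Q' = I $ and $ Q'' \in \{ I _{-}, I _{+}\} $, so that $ 2 ^{-\tau d}\lvert I\rvert \le \lvert Q'\rvert, \lvert Q''\rvert $ with $ d = \tau = 1 $, and the coefficient $ \tfrac{1}{\sqrt2} $ satisfies $ \tfrac{1}{\sqrt2} = \bigl[ \tfrac{\lvert I\rvert}{\lvert I\rvert} \cdot \tfrac{\lvert I _{\pm}\rvert}{\lvert I\rvert} \bigr] ^{1/2} $.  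Hence Theorem~\ref{t.a2} gives $ \norm \operatorname S _{\mathscr D} . L ^{2} (w) \mapsto L ^{2} (w) . \lesssim \norm w. A_2. $ with an implied constant independent of $ \mathscr D $ — the $ A_2 $ characteristic of Definition~\ref{d.Ap} being taken over all cubes, it is in particular insensitive to the choice of grid.  Averaging this estimate over the space of grids — that is, applying Minkowski's integral inequality to $ \operatorname H f $ written as an average of $ \operatorname S _{\mathscr D} f $ — produces $ \norm \operatorname H . L ^{2} (w) \mapsto L ^{2} (w). \lesssim \norm w. A_2. $, i.e.\ \eqref{e.TAp} for $ p = 2 $.

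The Riesz transforms on $ \mathbb R ^{d} $ and the planar Beurling operator are handled the same way, now invoking the representations of \cite{MR2367098} and \cite{MR1894362} respectively: each of these operators is a constant multiple of an average, against an invariant probability measure on a space of dyadic grids, of Haar shift operators of a single index $ \tau $ depending only on $ d $.  Theorem~\ref{t.a2} bounds every shift in the average by $ \lesssim \norm w. A_2. $ with a uniform constant, and averaging yields \eqref{e.TAp} at $ p = 2 $ for these operators as well.

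It remains to go from $ p = 2 $ to arbitrary $ 1 < p < \infty $, which is exactly the content of the sharp extrapolation theorem of \cites{MR1894362,MR2140200}: it upgrades an $ L ^{2} (w) $ bound of the form $ \lesssim \norm w. A_2. $ to the bound $ \lesssim \norm w. A_p. ^{\alpha(p)} $ on $ L ^{p} (w) $ with $ \alpha(p) = \max\{1, 1/(p-1)\} $, as claimed in \eqref{e.TAp}.  The only step that calls for real attention is checking that the shifts occurring in Petermichl's formulas genuinely obey the normalization built into Definition~\ref{d.well}, so that the single constant from Theorem~\ref{t.a2} controls the whole average; this amounts to tracking the combinatorial constants in those formulas, and is not a serious difficulty — the substance of the Corollary lies entirely in Theorem~\ref{t.a2}.
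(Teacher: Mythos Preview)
Your proof is correct and follows exactly the approach the paper indicates: represent each operator as an average of Haar shift operators of fixed index, apply Theorem~\ref{t.a2} uniformly over the grids, and then invoke sharp extrapolation \cites{MR1894362,MR2140200} to pass from $p=2$ to general $p$. The only discrepancy is bibliographic---the paper points to the original sources for the averaging representations (\cite{MR1756958} for the Hilbert transform, \cite{MR1964822} for the Riesz transforms, \cite{MR2140200} for the Beurling operator) rather than the later sharp-bound papers you cite, but the content is the same and your proof is in fact more detailed than the paper's one-paragraph sketch.
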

%%%%%%%%%%%%%%%%%%%%%%%%%%%%%%  COROLLARY COROLLARY COROLLARY

As is well-known, these singular integral operators are obtained by appropriate averaging of the 
Haar shifts, an argument invented in \cite{MR1756958}, to address the Hilbert transform.  
For the Riesz transforms, see \cite{MR1964822}, and the Beurling transform, see \cite{MR2140200}. 
We also derive, as a corollary, the sharp 
$ A_2$ bound for Haar square functions.  We leave the details of this to the reader.

The starting point of our proof is a beautiful `two weight $T1 $ Theorem for Haar shifts' 
due to Nazarov-Treil-Volberg \cite{MR2407233}. 
We 
recall a version of this Theorem in \S~\ref{s.ntv}.  This Theorem supplies necessary and sufficient 
conditions for an individual Haar shift to satisfy a two-weight $ L^2$ inequality, with the conditions 
being expressed in the language of the $\operatorname T1 $ Theorem.  In particular, it neatly identifies 
three estimates that need to be proved, with two related to paraproduct estimates.  In fact, this step 
is well-known, and is taken up immediately in e.\thinspace g.\thinspace \cite{MR2354322}.  
We then check the paraproduct bounds for $ A_2$ weights in \S~\ref{s.initial} and \S~\ref{s.main}, which 
is the main new step in this paper.

The question of bounds for singular integral operators on $ L ^{p} (w)$ that are sharp with respect to the 
$ A_p$ characteristic was identified in an  influential 
paper of Buckley, \cite{MR1124164}.  It took many years to find the first proofs of such estimates.  
We refer  the reader to  \cite{MR2354322} for some of this history, and point to 
the central role of the work of Nazarov-Treil-Volberg \cite{MR1685781} in shaping much of the work cited here. 
The prior proofs of Corollary~\ref{c.a2} have all relied upon Bellman function techniques.  And indeed, this 
technique will supply a proof of the   results in this paper.  The Beurling operator is  the most easily 
available,  since this operator can be seen as the average of the simplest of 
Haar shifts, namely martingale transforms, see \cite{MR1992955}.  
The $ A_2$ bound was derived for Martingale  transforms by J.~Wittwer \cite{MR1748283}.
%(Alternate square functions were considered in \cite{MR1771755}.)
The paraproduct structure is much more central to the 
problem if one works with Haar shifts that pair a `parent' Haar with a `child' Haar. 
If one considers Square Functions, sharp results were obtained  in $ L ^2 $ by Wittwer \cite{MR1897458}, 
and Hukovic-Treil-Volberg \cite{MR1771755}. 
Recently, Beznosova \cite{MR2433959}, has proved the linear bound 
for discrete paraproduct operators, again using the Bellman function method. It would be of interest to prove 
her Theorem with techniques closer to those of this paper.

\begin{ack} The authors are participants in a research program at the Centre de Recerca Matem\'atica, 
at the Universitat Aut\`onoma Barcelona,  Spain.  We thank the Centre for their hospitality, and very supportive environment.  
Xavier Tolsa pointed out some relevant references to us. 
\end{ack}

%%%%%%%%%%%%%%%%%%%%%%%%%%%%%% SECTION  SECTION SECTION
%%%%%%%%%%%%%%%%%%%%%%%%%%%%%% SECTION  SECTION SECTION 
\section{The Characterization of Nazarov-Treil-Volberg} \label{s.ntv}

The success of this approach is based upon a beautiful characterization of two weight inequalities. 
Indeed, this characterization is true for \emph{individual} two-weight inequalities.  This Theorem can be 
thought of as a `Two Weight $ T1$ Theorem.'   We are stating only a sub-case of their Theorem, which does 
not assume that the operators satisfy an $ L ^{2} (dx)$ bound. 

%%%%%%%%%%%%%%%%%%%%%%%%%%%%%% THEOREM THEOREM THEOREM
\begin{theorem}\label{t.ntv} [Nazarov-Treil-Volberg \cite{MR2407233}] Let $ \operatorname T $ be 
a Haar shift operator of index $ \tau $, as in Definition~\ref{d.well},
and $ \sigma , \mu $ two 
positive measures.  The $ L ^2 $ inequality 
\begin{equation}\label{e.ntv1} 
\norm \operatorname T (\sigma f). L ^{2} (\mu ).  \lesssim \norm f. L ^{2} (\sigma ). 
\end{equation}
holds iff the following three conditions hold. 
For all cubes $Q, Q', Q''$ with $ Q', Q''\subset Q$ and $  2 ^{- (\tau -1) d} \lvert  Q\rvert \le  \lvert  Q'\rvert , 
\lvert  Q''\rvert $, 
\begin{align}\label{e.ntv2} 
\ABs{\int _{Q''}\operatorname T (\sigma \mathbf 1_{Q'}) \; \mu (dx) }
&\le C _{\textup{WB}} \sqrt {\sigma (Q') \mu  (Q'')}  & \textup{(Weak Bnded)}
\\ \label{e.ntv3}
\norm \operatorname T(\sigma \mathbf 1_{Q}). L ^{2} (Q, \mu ). & \le C _{T1} \sqrt {\sigma (Q)} 
& (T1\in BMO) 
\\ \label{e.ntv4}
\norm \operatorname T ^{\ast }(\mu  \mathbf 1_{Q}). L ^{2} (Q, \sigma  ). & \le C _{T ^{\ast }1} \sqrt {\mu (Q)} 
& (T ^{\ast }1\in BMO)  
\end{align}
Moreover, we have the inequality 
\begin{equation}\label{e.ntv5}
\norm \operatorname T (\sigma \cdot ). L ^{2} (\sigma )\to L ^{2} (\mu ). 
\lesssim C _{\textup{WB}}+ C _{T 1} + C _{T ^{\ast }1}  \,. 
\end{equation}
\end{theorem}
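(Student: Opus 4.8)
The plan is to prove both directions of the equivalence, the quantitative estimate \eqref{e.ntv5} being the real content. The necessity of \eqref{e.ntv2}--\eqref{e.ntv4} is immediate: applying \eqref{e.ntv1} to $f=\mathbf 1_{Q'}$ and $f=\mathbf 1_{Q}$, and the adjoint inequality to $g=\mathbf 1_{Q}$, and recalling $\lVert\mathbf 1_{Q}\rVert_{L^2(\sigma)}=\sqrt{\sigma(Q)}$, produces the three testing conditions with constants no larger than the operator norm. Everything below concerns the converse, so assume $C_{\mathrm{WB}},C_{T1},C_{T^\ast 1}<\infty$; by density it suffices to prove
\begin{equation*}
\bigl\lvert\,\langle\operatorname T(\sigma f),g\rangle_{\mu}\,\bigr\rvert\;\lesssim\;\bigl(C_{\mathrm{WB}}+C_{T1}+C_{T^\ast 1}\bigr)\,\lVert f\rVert_{L^2(\sigma)}\,\lVert g\rVert_{L^2(\mu)}
\end{equation*}
for bounded, compactly supported $f,g$.

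First I would expand $f$ and $g$ in the martingale-difference systems adapted to $\sigma$ and to $\mu$: $f=\sum_{I}\Delta^{\sigma}_{I}f$, $g=\sum_{J}\Delta^{\mu}_{J}g$ (up to a constant on a fixed large cube, which only feeds a pure paraproduct term), with $\sum_{I}\lVert\Delta^{\sigma}_{I}f\rVert_{L^2(\sigma)}^{2}=\lVert f\rVert_{L^2(\sigma)}^{2}$ and likewise for $g$. Inserting these into $\langle\operatorname T(\sigma f),g\rangle_{\mu}=\sum_{Q}\sum_{Q',Q''}a_{Q',Q''}\bigl(\int f\,h_{Q'}\,d\sigma\bigr)\bigl(\int g\,h_{Q''}\,d\mu\bigr)$, the pivotal algebraic step is to split each raw coefficient. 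Since $h_{Q'}$ is constant on the children of $Q'$ and has vanishing Lebesgue mean, one has $\int f\,h_{Q'}\,d\sigma=\bigl[\text{a piece depending only on }\Delta^{\sigma}_{Q'}f\bigr]+\langle f\rangle^{\sigma}_{Q'}\int_{Q'}h_{Q'}\,d\sigma$, where the last factor records the oscillation of $\sigma$ at scale $Q'$ and vanishes when $\sigma$ is Lebesgue measure; the analogous split holds for $\int g\,h_{Q''}\,d\mu$. This breaks the bilinear form into four sums according to (cancellative or paraproduct)$\,\times\,$(cancellative or paraproduct), which I would organize along a stopping-time/corona decomposition of $f$ relative to $\sigma$ and of $g$ relative to $\mu$ (stopping where the relevant averages roughly double), so that on each corona those averages are comparable to the stopping value.

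The cancellative$\,\times\,$cancellative sum runs over triples $Q',Q''\subseteq Q$ lying within $\tau$ generations of $Q$ (by Definition~\ref{d.well}), hence of comparable size and bounded overlap; after using the weak-boundedness bound \eqref{e.ntv2} to absorb the genuinely innermost interactions --- for which the naive two-weight estimate $\sqrt{\langle\sigma\rangle_{Q}\langle\mu\rangle_{Q}}$ is unavailable --- it is summed by Cauchy--Schwarz and the martingale orthogonality above. Each of the three sums carrying a paraproduct factor is, in turn, a weighted-Carleson-embedding statement: the sequence of oscillation weights $\{\int_{Q'}h_{Q'}\,d\sigma\}$ is, up to bounded factors, the sequence of $\mu$-Haar coefficients of $\operatorname T(\sigma\mathbf 1_{Q})$, and condition \eqref{e.ntv3} says exactly that this is a Carleson sequence with constant $\lesssim C_{T1}^{2}$ (dually, \eqref{e.ntv4} handles the sequences built from $\operatorname T^{\ast}(\mu\mathbf 1_{Q})$ with constant $\lesssim C_{T^\ast 1}^{2}$); feeding these into the weighted Carleson embedding theorem bounds each such sum by the stated constant times $\lVert f\rVert_{L^2(\sigma)}\lVert g\rVert_{L^2(\mu)}$. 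Adding the four estimates gives \eqref{e.ntv5}.

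The main obstacle is the bookkeeping in the third step: correctly isolating the paraproduct pieces and matching them, term by term, with the $BMO$-type testing data, and checking that the cancellative remainder genuinely collapses onto the weak-boundedness condition together with the orthogonality estimate. The conceptual reason this bookkeeping is forced on us is that the Haar functions of Definition~\ref{d.haar} are flat --- not adapted to $\sigma$ or $\mu$ --- so $\int h_{Q}\,d\sigma\ne0$ in general and paraproduct terms cannot be avoided; once they are exposed, the weighted Carleson embedding theorem does the analytic work, and the remaining pieces are handled by Cauchy--Schwarz and $L^{2}$-orthogonality of martingale differences.
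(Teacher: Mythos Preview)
Your sketch is correct and follows essentially the same route as the paper's own discussion of this theorem: expand $f,g$ in the $\sigma$- and $\mu$-adapted martingale differences, split the bilinear form into a near-diagonal piece controlled by the weak-boundedness condition \eqref{e.ntv2} and off-diagonal paraproduct pieces controlled via the weighted Carleson embedding theorem using \eqref{e.ntv3}--\eqref{e.ntv4}. The paper does not give a self-contained proof but cites \cite{MR2407233} and records exactly this outline; your four-way split (cancellative/paraproduct on each side) is just a slightly finer bookkeeping of the same three-part decomposition the paper describes, and the corona/stopping-time layer you add is not needed for the argument as the paper sketches it (the paraproduct of the form \eqref{e.P} is handled directly by Carleson embedding), though it does no harm.
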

%%%%%%%%%%%%%%%%%%%%%%%%%%%%%% THEOREM THEOREM THEOREM

This Theorem is contained in \cite{MR2407233}*{Theorem 1.4}, aside from the claim \eqref{e.ntv5}.  
But this inequality can be seen from the proof in their paper.  Indeed, their proof is in close analogy to the 
$ T1$ Theorem.  Briefly, the proof is as follows.  The operator $ \operatorname T (\sigma \cdot )$ 
is expanded in `Haar basis', but  
the Haar bases are 
 adapted to the two measures $ \sigma $ and $ \mu $. 
This technique appeared in  \cite{MR1685781}, and has been used  subsequently in 
\cites{MR1992955,MR1748283,MR2354322}. 
  Expressing the bilinear form $ \int \operatorname T (\sigma f) \cdot g 
\; \mu  $ as a matrix in these two bases, the 
matrix is split into three parts.  Those terms `close to the diagonal' are controlled by the 
`weak boundedness' condition \eqref{e.ntv2}.  Those terms below and above the diagonal are 
recognized as paraproducts.  One of these is of the form 
\begin{equation} \label{e.P}
\operatorname P (f) \coloneqq 
\sum _{Q}  \sigma (Q) ^{-1} \int _{Q} f \sigma \; dy \cdot  \Delta _{Q} ^{w} ( \operatorname T (\sigma 1))
\end{equation}
Here the first term is an average of $ f$ with respect to the measure $ \sigma $, and the second is 
a martingale difference of $ \operatorname T (\sigma 1)$ with respect to the measure $ w$.  
In particular, $ \Delta _{Q} ^{w} ( \operatorname T (\sigma 1))$ are $ w$-orthogonal functions in $ Q$. 
Thus, one has the equality 
\begin{equation*}
\norm \operatorname P (f). L ^{2 } (w) . ^2 
= \sum _{Q} \Abs{\sigma (Q) ^{-1} \int _{Q} f \sigma \; dy}^2  \cdot 
\norm \Delta _{Q} ^{w} ( \operatorname T (\sigma 1)). L ^{2} (w). ^2 \,. 
\end{equation*}
The inequality $ \norm \operatorname P (f). L ^{2 } (w) . \lesssim \norm f. L ^2 (\sigma ). $ 
is a weighted Carleson embedding inequality that is implied by  the `$ \operatorname T1 \in BMO$' condition 
\eqref{e.ntv3}.  The other paraproduct is dual to the one in \eqref{e.P}.

%%%%%%%%%%%%%%%%%%%%%%%%%%%%%% SECTION  SECTION SECTION
%%%%%%%%%%%%%%%%%%%%%%%%%%%%%% SECTION  SECTION SECTION
\section{Initial Considerations} \label{s.initial}

We collect together a potpourri of facts that will be useful to us, and are of somewhat general nature. 
We begin with a  somewhat complicated definition that we will use in order to organize the proof of our main 
estimate.

%%%%%%%%%%%%%%%%%%%%%%%%%%%%%%  DEFINITION DEFINITION DEFINITION
\begin{definition}\label{d.linearizing}Let $ \mathscr Q'\subset  \mathscr Q$ be any 
 collection of dyadic cubes, and $ \mu $ a positive measure. 
 Call $ (\mathscr L \;:\; \mathscr Q' (L) ) $ 
a \emph{	$ \mu $-corona decomposition of  $ \mathscr Q'$ }  
 if these conditions hold. 
%%  ENUMERATE
\begin{enumerate}
\item For each $ Q\in \mathscr Q'$ there is a member of $ \mathscr L$ that contains 
 $ Q$, and letting $ \lambda (Q)\in \mathscr L$ denote the minimal cube which contains $ Q$ we have 
 \begin{equation}
\label{e.lin2}
4\frac {\mu(\lambda (Q))} {\lvert  \lambda (Q)\rvert } \ge   \frac {\mu(Q)} {\lvert  Q\rvert }  \,. 
\end{equation}
\item For all $ L \subsetneq L' \in \mathscr L$
\begin{equation}
\label{e.lin1}
\frac {\mu(L)} {\lvert  L\rvert } > 4 \frac {\mu(L')} {\lvert  L'\rvert } \,. 
\end{equation}
\end{enumerate}
%% ENUMERATE
We set  $ \mathscr Q' (L) \coloneqq \{ Q\in \mathscr Q' \;:\; \lambda (Q)= L\}$. 
The collections $ \mathscr Q' (L)$ partition $ \mathscr Q'$. 
\end{definition}
%%%%%%%%%%%%%%%%%%%%%%%%%%%%%%  DEFINITION DEFINITION DEFINITION

Decompositions of this type appear in a variety of questions.  We are using terminology which goes back 
to (at least) David and Semmes \cites{MR1251061,MR1113517}, though the same  type of construction 
appears as early as 1977 in \cite{MR0447956}, where it is called the `principle cube' construction. 
A subtle corona decomposition is central to \cite{MR2179730}, and the paper \cite{MR1934198} includes several 
examples in the context of dyadic analysis.

A basic fact  is this.
\begin{equation} \label{e.11/4}
\ABs{\bigcup _{\substack{L'\in \mathscr L\\ L'\subsetneq L }}  L'} \le \tfrac 14 \lvert  L\rvert\,, 
\qquad L\in \mathscr L 
\,.   
\end{equation}
This follows from \eqref{e.lin1},  which says that the intervals $ L' \subset L$ have much more than their fair 
share of the mass of $ \mu $, hence the $ L'$ have to be smaller intervals.  And this easily implies 
\begin{equation}\label{e.1/4}
\NOrm \sum _{\substack{L'\in \mathscr L\\ L'\subset L }} \mathbf 1_{L'} . 2. \lesssim \lvert  L\rvert ^{1/2} \,.  
\end{equation}

We have the following (known) Lemma,  but we detail it as it is one way that the $ A_2$ condition 
enters in the proof. 

%%%%%%%%%%%%%%%%%%%%%%%%%%%%%% LEMMA LEMMA LEMMA
\begin{lemma}\label{l.carleson} 
Let $ \mathscr L$ be associated with corona decomposition  for am $ A_2$ weight $ w$.  For any cube $ Q$ we have 
\begin{equation}\label{e.carleson}
\sum _{\substack{ L\in \mathscr L\\ L\subset Q}} w (L) \le \tfrac {16}9 \norm w. A_2.  w (Q) \,. 
\end{equation}
\end{lemma}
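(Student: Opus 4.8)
The plan is to exploit the geometric decay built into the corona decomposition, condition \eqref{e.lin1}, together with the definition of the $A_2$ characteristic. First I would fix a cube $Q$ and restrict attention to those $L \in \mathscr L$ with $L \subset Q$; among these, let $\mathscr L_{\mathrm{top}}$ be the maximal ones. Because of \eqref{e.lin1}, along any chain $L_0 \supsetneq L_1 \supsetneq \cdots$ in $\mathscr L$ the ratios $\mu(L_j)/\lvert L_j\rvert$ strictly increase by a factor of more than $4$ at each step; this is the quantitative engine that makes the sum geometric. The key point to extract is that for each fixed $L \in \mathscr L$,
\begin{equation*}
\sum_{\substack{L' \in \mathscr L \\ L' \subset L}} \lvert L'\rvert \le \tfrac{4}{3} \lvert L\rvert \,,
\end{equation*}
which is essentially \eqref{e.11/4} iterated: the immediate $\mathscr L$-children of $L$ occupy at most $\tfrac14 \lvert L\rvert$, their children at most $\tfrac14$ of that, and so on, summing the geometric series $\sum_{k\ge 0} 4^{-k} = \tfrac43$. (One must be slightly careful about whether $L$ itself is included; the constant $\tfrac{16}{9}$ in the statement is exactly $\tfrac43 \cdot \tfrac43$, suggesting the argument is applied once to pass from $w(L)$ back to $\lvert L\rvert$-type quantities and once for the geometric sum over $L'$.)

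Next I would convert the mass $w(L)$ into a statement controlled by $A_2$. By Cauchy--Schwarz applied to $1 = w^{1/2} \cdot w^{-1/2}$ on $L$, or more directly from the $A_2$ definition with $p=2$, one has for any cube $L$
\begin{equation*}
\frac{w(L)}{\lvert L\rvert} \cdot \frac{1}{\lvert L\rvert}\int_L w^{-1}\,dx \le \norm w. A_2. \,,
\end{equation*}
which I would use in the form $\lvert L\rvert^2 \le \norm w. A_2. \, w(L) \int_L w^{-1}\,dx$. The strategy is then: write $\sum_{L \subset Q} w(L)$, use \eqref{e.lin2} to compare $\mu = w$ densities between $Q$ and the cubes below it, and use the geometric packing of the $\lvert L\rvert$ to collapse the sum. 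More concretely, I expect the cleanest route is to bound $w(L) = \frac{w(L)}{\lvert L\rvert}\lvert L\rvert$, sum the $\lvert L\rvert$ over $L \subset Q$ using the iterated \eqref{e.11/4} against the \emph{maximal} containing cube (which is $\subset Q$), pick up a factor $\tfrac43$, and then at the top level use \eqref{e.lin2} to replace $\frac{w(L_{\mathrm{top}})}{\lvert L_{\mathrm{top}}\rvert}$ by at most $4 \frac{w(Q)}{\lvert Q\rvert}$ — wait, that would cost a factor $4$, not the stated $\tfrac{16}{9}$, so instead the $A_2$ hypothesis must be doing the comparison directly on each $L$ without routing through \eqref{e.lin2}.

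Reconsidering: the honest version is probably to apply $A_2$ to each $L$ to write $w(L) \lesssim \norm w.A_2. \lvert L\rvert^2 \big/ \int_L w^{-1}$, but a slicker alternative uses that $L \mapsto w^{-1}(L)$ is superadditive while the $\mathscr L$-cubes below a given $L$ are "mostly disjoint" with total $\lvert \cdot\rvert$-measure at most $\tfrac43 \lvert L\rvert$ by the packing bound; combining $w(L)\cdot w^{-1}(L) \le \norm w.A_2. \lvert L\rvert^2$ per cube with the fact that the $w^{-1}(L)$ for $L\subset Q$ sum to at most $\tfrac43\, w^{-1}(Q) \cdot (\text{something})$... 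The main obstacle, and the step I would spend the most care on, is getting the constant chain to land exactly on $\tfrac{16}{9} = (\tfrac43)^2$: this forces the argument to be exactly "geometric packing ($\times \tfrac43$)" composed with "$A_2$ duality / one more packing ($\times \tfrac43$)", with no slack, so I would organize the proof as (i) prove $\sum_{L'\subset L,\ L'\in\mathscr L}\lvert L'\rvert \le \tfrac43\lvert L\rvert$ from \eqref{e.11/4}, (ii) combine this with the pointwise $A_2$ inequality $\frac{w(L)}{\lvert L\rvert} \le \norm w.A_2.\big(\frac{1}{\lvert L\rvert}\int_L w^{-1}\big)^{-1}$ and the superadditivity of $A \mapsto \int_A w^{-1}$ over the almost-disjoint family $\{L \subset Q\}$, and (iii) assemble the two $\tfrac43$'s, verifying that the overlap allowed by \eqref{e.11/4} is precisely what the geometric series absorbs. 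Everything else is routine.
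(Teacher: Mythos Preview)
Your proposal has a genuine gap. The step you label ``superadditivity of $A \mapsto \int_A w^{-1}$ over the almost-disjoint family $\{L \subset Q\}$'' does not go through: the family $\{L \in \mathscr L : L \subset Q\}$ is a \emph{nested} tree, not an almost-disjoint collection. Along any chain $L_0 \supsetneq L_1 \supsetneq \cdots$ every cube contains the next, so $\sum_j w^{-1}(L_j)$ can be arbitrarily large compared with $w^{-1}(L_0)$, and superadditivity says nothing. For the same reason the scheme ``write $w(L) = \tfrac{w(L)}{|L|}\,|L|$ and sum the $|L|$ via Lebesgue packing'' cannot close: the Lebesgue packing $\sum_{L'\subset L}|L'| \le \tfrac43|L|$ is true, but the density $\tfrac{w(L)}{|L|}$ \emph{grows} by a factor exceeding $4$ at each corona step (that is exactly \eqref{e.lin1}), so the two effects cancel and you get no decay in $w$-mass.

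The paper's argument is organized around the opposite idea: run the geometric series in $w$-measure, not in Lebesgue measure. One first derives from $A_2$ the $A_\infty$-type inequality $\tfrac{|E|}{|L|} \le \bigl(\|w\|_{A_2}\,\tfrac{w(E)}{w(L)}\bigr)^{1/2}$ for $E\subset L$ (a single Cauchy--Schwarz). Applying this with $E$ equal to the \emph{complement} in $L$ of the union of its strict $\mathscr L$-children, and using \eqref{e.11/4} to get $|E|\ge \tfrac34|L|$, yields $w(E) \ge \tfrac{9}{16}\|w\|_{A_2}^{-1}\,w(L)$, i.e.\ the $\mathscr L$-children carry at most $(1-\tfrac{9}{16}\|w\|_{A_2}^{-1})\,w(L)$ of the $w$-mass. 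Summing this decay over generations gives the geometric series $\sum_{k\ge 0}(1-\tfrac{9}{16}\|w\|_{A_2}^{-1})^k = \tfrac{16}{9}\|w\|_{A_2}$, which is exactly the stated constant. So the $\tfrac{16}{9}$ is not $(\tfrac43)^2$ from two Lebesgue packings; it is $(\tfrac34)^{-2}$ coming from a single application of $|E|\ge \tfrac34|L|$ fed through the square in the $A_\infty$ inequality.
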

%%%%%%%%%%%%%%%%%%%%%%%%%%%%%% LEMMA LEMMA LEMMA

%%%%%%%%%%%%%%%%%%%%%%%%%%%%%% PROOF PROOF PROOF
\begin{proof}  It suffices to show this: 
For  $ L\in \mathscr L $ 
\begin{equation} \label{e.k}
w \bigl( \textstyle\bigcup\{ L'\in \mathscr L \;:\; L'\subsetneq   L\}\bigr) \le 
(1-c \norm w. A_2. ^{-1} )  w (L)\,, \qquad c= \frac 9 {16} \,. 
\end{equation}

We begin with a calculation related to $ A _{\infty }$. 
Let $ E$ be a measurable subset of $ L$.  Then, 
\begin{align}
\frac {\lvert  E\rvert } {\lvert  L\rvert } & = \lvert  L\rvert ^{-1} \int _{E} w ^{1/2} \cdot w ^{-1/2} \; dx
\\
& \le \Biggl[ \frac {w (E)} {\lvert  L\rvert } \cdot \frac {w ^{-1} (L)} {\lvert  L\rvert } \Biggr] ^{1/2} 
\\  \label{e.AzI}
& \le
\Biggl[   \norm w. A_2. \frac {w (E)} {w (L)} \Biggr] ^{1/2} \,. 
\end{align}

Apply this with 
$ L-E= \bigcup \{ L'\in \mathscr L \;:\; L'\subsetneq   L\}$.  Then, by \eqref{e.11/4}, $ \lvert  L-E\rvert< \frac 14 \lvert
L\rvert  $, so that $ \lvert  E\rvert\ge \tfrac 34 \lvert  L\rvert $.
It follows that we then have 
\begin{equation*}
\frac 9 {16 \norm w. A_2.} \cdot w (L) \le w (E)\,.  
\end{equation*}
Whence, we see that \eqref{e.k} holds. 
Our proof is complete. 
\end{proof}
%%%%%%%%%%%%%%%%%%%%%%%%%%%%%% PROOF PROOF PROOF

Concerning the Haar shift operators $ \operatorname T$, we make the following definition. 

%%%%%%%%%%%%%%%%%%%%%%%%%%%%%%  DEFINITION DEFINITION DEFINITION
\begin{definition}\label{d.simple} 
 We say that $ \operatorname T$ is a \emph{simple Haar shift operator 
of index $ \tau  $}  iff  
\begin{gather}
\operatorname Tf = \sum _{Q\in \mathscr Q} 
 \ip f, g_{Q}, \gamma  _{Q} \,,  
\\  \label{e.cancellation}
g_Q, \gamma _Q \in \operatorname {span} ( h_{Q'} \;:\; Q'\subset Q\,, 2 ^{-\tau d} \lvert  Q\rvert \le \lvert  Q'\rvert
) \,, 
\\ \label{e.size}
\norm g_Q.\infty . \,,\, \norm \gamma _Q . \infty . \le \lvert  Q\rvert ^{-1/2} \,.   
\end{gather}
\end{definition}
%%%%%%%%%%%%%%%%%%%%%%%%%%%%%%  DEFINITION DEFINITION DEFINITION
Below, we will only consider simple Haar shift operators. 
The  important property they satisfy is

%%%%%%%%%%%%%%%%%%%%%%%%%%%%%% PROPOSITION PROPOSITION PROPOSITION
\begin{proposition}\label{p.weakL1}  A simple Haar shift operator $ \operatorname T$ with index $ \tau $ maps 
$ L ^{2}(dx)$ into itself with norm at most $ \lesssim \tau $. It maps 
$ L ^{1}(dx)$ 
into $ L ^{1,\infty } (dx)$ with norm $ \lesssim 2 ^{\tau d} $. 
\end{proposition}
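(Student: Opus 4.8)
The plan is to prove the two assertions separately, starting from the expansion
$$ \operatorname Tf = \sum _{Q\in \mathscr Q}  \ip f, g_{Q}, \gamma  _{Q} $$
of Definition~\ref{d.simple}, with the structural conditions \eqref{e.cancellation} and \eqref{e.size} as the only input. For the $L^2(dx)$ bound I would argue by (almost) orthogonality. Fix a generation $k$ with $0\le k\le \tau d$ and group together the summands coming from cubes $Q$ whose side is a fixed dyadic scale and whose ``active'' Haar scales are shifted by $k$; write $\operatorname T_k$ for the resulting piece, so $\operatorname T=\sum_{k} \operatorname T_k$ is a sum of $O(\tau)$ pieces (grouping the scales of $g_Q$ and $\gamma_Q$). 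Within a single $\operatorname T_k$ the functions $\gamma_Q$ have disjoint interiors of support across cubes $Q$ of the same scale, and across different scales they live at geometrically separated frequencies, so $\operatorname T_k$ is bounded on $L^2(dx)$ with a dimensional constant. The key estimate is that $\abs{\ip f, g_Q,} \le \norm g_Q.2. \norm f \mathbf 1_Q.2. \le \norm f\mathbf 1_Q.2.$ by \eqref{e.size}, while $\norm \gamma_Q.2.\le 1$; combined with the bounded-overlap of the $Q$'s of a fixed scale this gives $\norm \operatorname T_k f.2. \lesssim \norm f.2.$, and summing the $\lesssim \tau$ pieces gives $\norm \operatorname T f.2. \lesssim \tau \norm f.2.$.

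For the weak-type $(1,1)$ bound I would run a Calder\'on--Zygmund decomposition. Given $f\in L^1(dx)$ and $\lambda>0$, apply the dyadic CZ decomposition at height $\lambda$ to obtain pairwise disjoint maximal dyadic cubes $\{Q_j\}$ with $\lambda < \abs{Q_j}^{-1}\int_{Q_j}\abs f \le 2^d\lambda$, write $f=g+b$ with $g$ the good part ($\norm g.\infty. \lesssim 2^d\lambda$, $\norm g.1.\le\norm f.1.$) and $b=\sum_j b_j$ where $b_j=(f-\langle f\rangle_{Q_j})\mathbf 1_{Q_j}$ is supported on $Q_j$ and has mean zero. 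The good part is handled by the $L^2$ bound already proved: $\abs{\{\abs{\operatorname T g}>\lambda/2\}} \lesssim \lambda^{-2}\norm \operatorname T g.2.^2 \lesssim \tau^2\lambda^{-2}\norm g.2.^2 \lesssim \tau^2 2^d \lambda^{-1}\norm f.1.$. For the bad part, let $Q_j^{*}$ be the dyadic ancestor of $Q_j$ that is $\tau$ generations up, so $\abs{Q_j^*}=2^{\tau d}\abs{Q_j}$; then $\abs{\bigcup_j Q_j^*}\le 2^{\tau d}\sum_j\abs{Q_j}\le 2^{\tau d}\lambda^{-1}\norm f.1.$, and it suffices to estimate $\operatorname T b_j$ off $Q_j^*$. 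The point of the index-$\tau$ cancellation condition \eqref{e.cancellation} is exactly that, for a cube $Q$ contributing to $\operatorname T b_j$, both $g_Q$ and $\gamma_Q$ are spanned by Haar functions $h_{Q'}$ with $Q'\subset Q$ and $\abs{Q'}\ge 2^{-\tau d}\abs Q$; if the support of $\gamma_Q$ meets the complement of $Q_j^*$ while $\ip b_j, g_Q,\neq 0$, one checks that $Q'\supsetneq Q_j$ must occur among the Haar scales of $g_Q$, hence $\ip b_j, g_Q,=0$ by the mean-zero property of $b_j$ against $h_{Q'}$ (which is constant on $Q_j$). Therefore $\operatorname T b_j$ is supported in $Q_j^*$, and summing the measures of the $Q_j^*$ controls $\abs{\{\abs{\operatorname T b}>\lambda/2\}}$ by $2^{\tau d}\lambda^{-1}\norm f.1.$.

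I expect the main obstacle to be the bookkeeping in the bad-part step: making precise the claim that if $\gamma_Q$ (hence some $h_{Q''}$, $Q''\subset Q$, $\abs{Q''}\ge 2^{-\tau d}\abs Q$) is nonzero somewhere outside $Q_j^*$ and $\ip b_j, g_Q,\neq 0$, then $g_Q$ necessarily involves a Haar function at a scale $\ge \abs{Q_j}$ sitting above $Q_j$, so that the mean-zero cancellation of $b_j$ kills it. This is really a statement that the ``radius of influence'' of each summand $\ip f,g_Q,\gamma_Q$ is at most $\tau$ generations: any input inside $Q_j$ that produces output outside the $\tau$-fold parent $Q_j^*$ would require a cube $Q$ with $\abs Q \ge 2^{\tau d}\abs{Q_j}$ and a Haar scale $Q'\subset Q$ with $Q_j\subsetneq Q'$, and then $h_{Q'}$ is constant on $Q_j$. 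Once this localization is nailed down, both the good-part and bad-part estimates are routine, and combining them yields $\norm \operatorname T f.1,\infty. \lesssim 2^{\tau d}\norm f.1.$ as claimed (the $\tau^2$ from the good part being absorbed into $2^{\tau d}$).
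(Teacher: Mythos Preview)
Your proposal is correct and follows essentially the same route as the paper: almost-orthogonality for the $L^2$ bound, and the dyadic Calder\'on--Zygmund decomposition with the $\tau$-fold parent $Q_j^{(\tau)}$ as the exceptional set for the weak-type bound. The paper's $L^2$ grouping is slightly cleaner than yours---it sets $\operatorname T_s f = \sum_{|Q|=2^{sd}} \ip f, g_Q, \gamma_Q$, observes $\norm \operatorname T_s.2\to 2.\le 1$ from \eqref{e.size}, and uses \eqref{e.cancellation} to get $\operatorname T_s \operatorname T_{s'}^\ast = \operatorname T_s^\ast \operatorname T_{s'}=0$ for $|s-s'|>\tau$---whereas your description of the pieces $\operatorname T_k$ (``fixed dyadic scale'' together with a shift $0\le k\le \tau d$) is a bit muddled; but the underlying idea and the bad-part localization argument are identical.
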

%%%%%%%%%%%%%%%%%%%%%%%%%%%%%% PROPOSITION PROPOSITION PROPOSITION

The point is that these bounds only depend upon the index $ \tau $.

%%%%%%%%%%%%%%%%%%%%%%%%%%%%%% PROOF PROOF PROOF
\begin{proof}
The proof is well-known, but we present it as some similar difficulties appear later in the proof; see the 
discussion following \eqref{e.qx}. 
Set 
\begin{equation*}
\operatorname T _ s  f 
\coloneqq  \sum _{\substack{Q\in \mathscr Q\\  \lvert  Q\rvert= 2 ^{s d}  }}  \ip f, g_{Q}, \gamma  _{Q} \,,  
\end{equation*}
which is the operator at scale $ 2 ^{s}$.  The `size condition' \eqref{e.size} implies that 
$ \norm \operatorname T_s. L ^{2} (dx).  \le 1$.  The `cancellation condition' \eqref{e.cancellation} 
then implies that 
\begin{equation*}
\operatorname T _{s}\operatorname T ^{\ast } _{s'}
=\operatorname T ^{\ast } _{s}\operatorname T _{s'}=0  \,, \qquad \lvert  s-s'\rvert> \tau \,.   
\end{equation*}
So we see that $ \norm \operatorname T. L ^{2} (dx). \le \tau +1$. 

Concerning the weak $ L ^{1} (dx)$ inequality, we use the usual proof.
Fix $ f\in L ^{1} (dx)$. Apply the dyadic Calder\'on-Zygmund Decomposition to $ f$ 
at height $ \lambda $.  Thus, $ f= g+b$ where $ \norm g. 2. \lesssim \sqrt \lambda  \norm f.L^1 (dx). ^{1/2} $, 
and $ b$ is supported on a union of  disjoint dyadic cubes $ Q \in \mathscr B$ with 
\begin{gather} \label{e.bad0}
\int _Q b \; dx =0 \,, \qquad Q\in \mathscr B\,, 
\\ \label{e.badb}
\sum _{Q\in \mathscr B} \lvert  Q\rvert \lesssim \lambda ^{-1} \norm f.1. \,.   
\end{gather}

For the `good' function $ g$, using the $ L ^{2} (dx)$ estimate we have 
\begin{align}
\lvert  \{ \operatorname T g>  \tau \lambda \}\rvert & \le (\tau \lambda) ^{-2} \norm \operatorname T g. L ^{2} (dx). ^2 
\\ \label{e.Good}
& \lesssim   \lambda ^{-2} \norm g. 2. ^2 \lesssim   \lambda ^{-1} \norm f.L^1 (dx). \,.
\end{align}

For the `bad' function, we modify the usual argument.  For a dyadic cube $ Q$,  and integer $ t$, let $ Q ^{(t)}$
denote it's $ t$-fold parent.  Thus, $ Q ^{(1)}$ is the minimal dyadic cube that strictly contains $ Q$, and 
inductively, $ Q ^{(t+1)}= (Q ^{(t)}) ^{(1)}$.  Observe that  \eqref{e.badb} implies  
\begin{equation} \label{e.parents}
\Abs{ \bigcup \{ Q ^{(\tau )} \;:\; Q \in \mathscr B\}} \lesssim 2 ^{\tau  d} \lambda ^{-1} \norm f.1. \,. 
\end{equation}
And, the 'cancellation condition' \eqref{e.cancellation}, with \eqref{e.bad0}, imply that 
for $ Q\in \mathscr B$, and $ x\not\in Q ^{(\tau )}$, we have 
\begin{equation*}
\operatorname T (\mathbf 1_{Q} b) (x) 
= \sum _{Q' \;:\; Q ^{(\tau )} \subsetneq Q'} 
\ip \mathbf 1_{Q} b, g_{Q'}, \gamma _{Q'} (x) = 0 
\end{equation*}
since $ g_{Q'}$ will be constant on the cube $ Q$.

Hence, we have 
\begin{align*}
\lvert  \{\operatorname T (b) > \lambda \}\rvert 
\le 
\Abs{ \bigcup \{ Q ^{(\tau )} \;:\; Q \in \mathscr B\}} \lesssim 2 ^{\tau  d} \lambda ^{-1} \norm f.1. \,. 
\end{align*}
This completes the proof. 

\end{proof}
%%%%%%%%%%%%%%%%%%%%%%%%%%%%%% PROOF PROOF PROOF

We need  a version of the John-Nirenberg inequality, which
says that a `uniform $ L ^{0}$ condition implies exponential integrability.'  

%%%%%%%%%%%%%%%%%%%%%%%%%%%%%% LEMMA LEMMA LEMMA
\begin{lemma}\label{l.JN}  This holds for all integers $ \tau $. 
Let $ \{\phi _Q \;:\; Q\in \mathscr Q\} $ be functions so that for all dyadic cubes $ Q$ we have 
%%  ENUMERATE
\begin{enumerate}
\item $ \phi _Q$ is supported on $ Q$ and is constant on each sub-cube $ Q'\subset Q$ with $ \lvert  Q'\rvert= 2 ^{-\tau d} 
\lvert  Q\rvert $; 
\item $\norm \phi_Q. \infty . \le 1 $; 
\item  
for all dyadic cubes $ Q$, we have 
\begin{equation}\label{e.JN1}
\ABs{ \Biggl\{  \ABs{\sum _{Q' \;:\; Q'\subset Q} \phi _{Q'} } > 1  \Biggr\}} \le 2 ^{-\tau d-1} \lvert  Q\rvert \,.  
\end{equation}
\end{enumerate}
%% ENUMERATE
It then follows that we have the estimate uniform in $ Q$ and $ t>1$.  
\begin{equation}\label{e.JN2}
\ABs{ \Biggl\{  \ABs{\sum _{Q' \;:\; Q'\subset Q} \phi _{Q'} } > 2 \tau  t  \Biggr\}} \le  \tau  2  ^{-t+1} \lvert  Q\rvert  \,, 
\qquad t >1 \,. 
\end{equation}
\end{lemma}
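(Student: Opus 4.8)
The plan is to prove the John–Nirenberg–type estimate \eqref{e.JN2} by iterating the hypothesis \eqref{e.JN1} through successive generations of a stopping-time (corona-type) construction applied to the partial sums $\sum_{Q'\subset Q}\phi_{Q'}$. The core idea: decompose the tail $\{\,|\sum_{Q'\subset Q}\phi_{Q'}|>2\tau t\,\}$ into pieces indexed by the ``first generation'' of maximal cubes where the sum restricted to cubes above them first exceeds $1$, then recurse. The cancellation/locality hypothesis (1) — that each $\phi_{Q'}$ is constant on subcubes of size $2^{-\tau d}|Q'|$ — is what makes the recursion clean, since it lets us say that on a subcube $R$ the sum $\sum_{Q'\subset Q, Q'\supsetneq R^{(\tau)}}\phi_{Q'}$ is constant on $R$, exactly as in the proof of Proposition~\ref{p.weakL1}.

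First I would fix $Q$ and, writing $S_Q \coloneqq \sum_{Q'\,:\,Q'\subset Q}\phi_{Q'}$, run a stopping-time argument: let $\mathscr{S}_1$ be the collection of maximal dyadic $R\subsetneq Q$ such that $\bigl|\sum_{Q'\,:\,R\subseteq Q'\subset Q}\phi_{Q'}\bigr|>1$. By \eqref{e.JN1} applied on $Q$, the union $\bigcup_{R\in\mathscr{S}_1}R$ has measure $\le 2^{-\tau d-1}|Q|$. The key structural observation (from hypothesis (1), exactly as in Proposition~\ref{p.weakL1}) is that for $x$ lying in the $\tau$-fold dilate but not deeper, the ``already-summed'' part up to the stopping generation contributes a bounded amount: if $x\notin\bigcup_{R\in\mathscr{S}_1}R^{(\tau)}$ then the partial sum over cubes $Q'$ with $Q'\supsetneq R^{(\tau)}$ is constant on the cube containing $x$, so on that region $|S_Q|\le 1 + (\text{contribution of $\le\tau$ intermediate scales of $\|\phi\|_\infty\le1$ terms})\le \tau+1\le 2\tau$. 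Hence $\{|S_Q|>2\tau\}\subset \bigcup_{R\in\mathscr{S}_1}R^{(\tau)}$, and since each stopping cube contributes at most $2^{\tau d}$ times its own measure through its dilate, \eqref{e.badb}-style bookkeeping as in \eqref{e.parents} gives $|\{|S_Q|>2\tau\}|\le 2^{\tau d}\cdot 2^{-\tau d-1}|Q| = \tfrac12|Q|$ — well, more carefully one gets a factor counting overlaps, so I'd instead organize the dilates as in \eqref{e.parents} to keep the constant at $\le \tau\,|Q|/2$ or similar; the precise constant $\tau\,2^{-t+1}$ in \eqref{e.JN2} is obtained by being a little generous.

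Next I would iterate: inside each $R\in\mathscr{S}_1$, the function $S_Q - (\text{constant part above }R)$ equals $\sum_{Q'\subset R}\phi_{Q'}$ up to a bounded ($\le 2\tau$) additive error, and hypothesis \eqref{e.JN1} holds verbatim with $R$ in place of $Q$. So the second-generation stopping cubes $\mathscr{S}_2$ inside all the $R$'s satisfy $|\bigcup\mathscr{S}_2|\le 2^{-\tau d-1}|\bigcup\mathscr{S}_1|\le (2^{-\tau d-1})^2|Q|$, and on the complement of their $\tau$-dilates we gain another $+2\tau$ of room, i.e.\ $\{|S_Q|>2\tau\cdot 2\}\subset\bigcup_{R\in\mathscr{S}_2}R^{(\tau)}$ up to lower generations. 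Inductively, $\{|S_Q|>2\tau k\}$ is contained in the $\tau$-dilates of the $k$-th generation stopping cubes, whose total measure (again via the \eqref{e.parents} overlap bookkeeping, which costs a factor $\le\tau$ per application because a point lies in at most $\tau$ of the dilates at a given generation) is $\le \tau\,2^{-k}|Q|$. Writing $t = k$ (and interpolating for non-integer $t$ by monotonicity, which accounts for the ``$-t+1$'' and the leading $\tau$), this yields \eqref{e.JN2}.

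**The main obstacle** I anticipate is the careful bookkeeping of the dilate-overlaps so that the geometric decay constant stays exactly $2^{-1}$ per generation (rather than degrading by the $2^{\tau d}$ blow-up from passing to $\tau$-fold parents), and correspondingly pinning down that the only price paid is the single polynomial factor $\tau$ in front of \eqref{e.JN2}. The resolution is precisely the trick already used in Proposition~\ref{p.weakL1}: one does not estimate $\sum_{R\in\mathscr{S}_j}|R^{(\tau)}|$ crudely, but observes (as in \eqref{e.parents}) that $\bigl|\bigcup_{R\in\mathscr{S}_j}R^{(\tau)}\bigr|\le \tau\sum_{s=0}^{\tau-1}\bigl|\bigcup\{R^{(s)}:R\in\mathscr{S}_j\}\bigr|$-type splitting, and each $\bigcup\{R^{(s)}:R\in\mathscr{S}_j\}$ is a disjoint union of cubes of a fixed relative scale whose total measure is $\le |\bigcup_{R\in\mathscr{S}_j}R|$ — no $2^{\tau d}$ loss, only the harmless factor $\tau$ from summing over $s$. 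Everything else is the routine stopping-time recursion, and the hypothesis $2^{-\tau d-1}$ (rather than just ``$<1$'') in \eqref{e.JN1} is exactly the slack needed to absorb this and still contract.
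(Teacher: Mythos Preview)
The paper states Lemma~\ref{l.JN} without proof (it is presented as a known John--Nirenberg variant), so there is no argument to compare against. Your overall strategy --- iterate a stopping-time construction, pass to $\tau$-parents, and use the slack $2^{-\tau d-1}$ in \eqref{e.JN1} to absorb the $2^{\tau d}$ dilation cost so that each generation contracts by a factor $\tfrac12$ --- is the right one.

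There is, however, a real gap in your execution. You define $\mathscr S_1$ as the maximal $R$ for which the \emph{partial sum over the ancestor chain} $\sum_{R\subseteq Q'\subset Q}\phi_{Q'}$ exceeds $1$, and then assert that $\lvert\bigcup\mathscr S_1\rvert\le 2^{-\tau d-1}\lvert Q\rvert$ ``by \eqref{e.JN1} applied on $Q$.'' But \eqref{e.JN1} bounds the level set of the \emph{full} sum $F_Q=\sum_{Q'\subset Q}\phi_{Q'}$, summed over \emph{all} descendants of $Q$, not over the finite chain of ancestors of $R$. On a cube $R$ where your chain-sum exceeds $1$, the remaining tail $\sum_{Q'\subsetneq R}\phi_{Q'}$ can cancel it, so $R$ need not lie inside $\{\lvert F_Q\rvert>1\}$; conversely your stopping set can be strictly larger than $\{\lvert F_Q\rvert>1\}$, and \eqref{e.JN1} gives no bound on it. (A secondary issue: your chain-sum is not even constant on $R$, since $\phi_R$ itself varies on the $\tau$-grandchildren of $R$; so the stopping condition is ambiguous as written.)

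The repair is to run the stopping time on $F_Q$ directly rather than on ancestor partial sums. Reduce to finitely many nonzero $\phi_{Q'}$ so that $\{\lvert F_Q\rvert>1\}$ is a finite union of dyadic cubes, take $\mathscr S_1$ to be its maximal cubes (so $\sum\lvert R\rvert\le 2^{-\tau d-1}\lvert Q\rvert$ is now genuinely \eqref{e.JN1}), and then pass to the $\tau$-parents as you describe. On each resulting cube $P$ one writes $F_Q=F_P+D_P$ and must check that $\lvert D_P\rvert\lesssim\tau$; this uses both that $P^{(1)}$ meets $\{\lvert F_Q\rvert\le 1\}$ (from maximality) and the $\tau$-scale constancy hypothesis~(1), and is where the remaining care is needed. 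Once that is in hand, your recursion and the bookkeeping you outline for the factor $\tau$ go through.
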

\section{The Main Argument} \label{s.main}
We begin the main line of argument to prove \eqref{e.a2}. We no longer try to keep track of the dependence on 
$ \tau $ in our estimates. (It is, in any case, exponential in $ \tau $.)  Accordingly, we assume that 
we work with a subset $ \mathscr Q _{\tau } $ of dyadic cubes with `scales separated by $ \tau $.'  That is, 
we assume that for $ Q'\subsetneq Q $ and $ Q',Q\in \mathscr Q$ we have $ \lvert  Q'\rvert\le 2 ^{-d\tau} \lvert
Q\rvert  $, where $ d$ is dimension.

It is well-known that \eqref{e.a2} is equivalent 
to showing that 
\begin{equation*}
\norm \operatorname T (fw). L ^{2} (w ^{-1} ). \lesssim \norm w. A_2. \norm f. L ^{2} (w). \,. 
\end{equation*}
Here we are using the dual-measure formulation, so that the measure $ w$ appears on both sides of the inequality, 
as in Theorem~\ref{t.ntv}.

By Theorem~\ref{t.ntv}, and the symmetry of the $ A_2$ condition, 
it is sufficient to check that   the two inequalities below hold for all simple 
Haar shift operators $ \operatorname T$ of index $ \tau $: 
\begin{align}\label{e.I2}
\lvert  \ip \operatorname T ( w\mathbf 1_{Q}), w ^{-1} \mathbf 1_{R},  \rvert 
&\lesssim  \norm w . A_2. \sqrt {w (Q) w ^{-1} (R)}\,, 
\\ \label{e.I3}
\int _{Q} \lvert  \operatorname T (w \mathbf 1_{Q})\rvert ^2 \; w ^{-1} dx 
&\lesssim \norm w. A_2. ^2 w (Q) \,. 
\end{align}
These should hold for all dyadic cubes $ Q$, and 
in \eqref{e.I2}, we have $  2 ^{-(\tau+1) d}\lvert  Q\rvert \le \lvert  R\rvert\le 2 ^{(\tau+1) d} \lvert  Q\rvert  $.

In the present circumstance, the `weak boundedness' inequality \eqref{e.I2} can be derived from the `$ T1$' inequality 
\eqref{e.I3}.  We can assume that $ \lvert  Q\rvert\le \lvert  R\rvert  $ by passing to the dual operator 
and replacing $ w$ by $ w ^{-1} $. If $ \lvert  Q\rvert=\lvert  R\rvert  $, the inner product is zero unless 
$ Q=R$. But then we just appeal to \eqref{e.I3}. 
\begin{align*}
\lvert  \ip \operatorname T ( w\mathbf 1_{Q}), w ^{-1} \mathbf 1_{Q},  \rvert
& \le \sqrt {w ^{-1} (Q)} \cdot \norm \mathbf 1_{Q} 
\operatorname T ( w\mathbf 1_{Q}) . L ^{2} (w ^{-1} ). 
\\
& \lesssim \norm w.A_2. \sqrt {w (Q) \cdot w ^{-1} (Q)} \,. 
\end{align*}

If $ \lvert  Q\rvert< \lvert  R\rvert  $, let assume that $ Q\subset R$, and write 
\begin{align*}
\lvert  \ip \operatorname T ( w\mathbf 1_{Q}), w ^{-1} \mathbf 1_{R},  \rvert 
& \le 
\lvert  \ip \operatorname T ( w\mathbf 1_{Q}), w ^{-1} \mathbf 1_{Q},  \rvert 
+
\lvert  \ip \operatorname T ( w\mathbf 1_{Q}), w ^{-1} \mathbf 1_{R-Q},  \rvert \,.  
\end{align*}
The first term on the right is handled just as in the previous case.  In the second case, 
we use the fact that $ 2 ^{-\tau d}\lvert  R\rvert\le \lvert  Q\rvert< \lvert  R\rvert   $, 
so that there is a difference in scales between the two cubes of only at most $ \tau $ scales. 
That, with the size conditions on $ \operatorname T$ lead to 
\begin{align}
\lvert  \ip \operatorname T ( w\mathbf 1_{Q}), w ^{-1} \mathbf 1_{R-Q},  \rvert 
& \lesssim 
\frac {w (Q) w ^{-1} (R)} {\lvert  R\rvert } 
\\ \label{e.largescales}
& \lesssim \norm w. A_2. \sqrt {w (Q) \cdot w ^{-1} (R)} \,. 
\end{align}
The last inequality follows since 
\begin{align*}
\sqrt {\frac {w (Q) w ^{-1} (R)} {\lvert  R\rvert ^2 } } 
& 
\le 
\sqrt {\frac {w (R) w ^{-1} (R)} {\lvert  R\rvert ^2 } } 
 \le \sqrt {\norm w. A_2.} \le \norm w.A_2. \,.  
\end{align*}
Indeed, we always have $ 1\le \norm w.A_2.$.  The case of $ Q\cap R= \emptyset $ is handled in a similar 
fashion.  

\bigskip 
To verify \eqref{e.I3}, we first treat the  `large scales.'
\begin{align*}
\NOrm  \mathbf 1_{Q_0 } \sum _{Q \;:\; Q\supsetneq Q_0} \ip w \mathbf 1_{Q_0}, g_{Q}, \gamma _Q . L ^{2} (w ^{-1} ).
& 
\lesssim \frac {w (Q_0) w ^{-1} (Q_0) ^{1/2}  } {\lvert  Q_0\rvert }
\\
& \lesssim \sqrt {w (Q_0)} \cdot \norm w.A_2. 
\end{align*}

Therefore, it suffices to prove 
\begin{equation}\label{e.I33}
\NOrm \sum _{Q \;:\; Q\subset Q_0} \ip w, g_{Q}, \gamma _Q . L ^{2} (w ^{-1} ). \lesssim \norm w. A_2. \sqrt {w (Q_0)} \,. 
\end{equation}

Let us define for dyadic cubes $ Q_0$ and collections of dyadic cubes $ \mathscr Q'$,
\begin{align}\label{e.HQ}
H (Q_0,\mathscr Q')&  \coloneqq \sum _{\substack{Q\subset Q_0\\ Q\in \mathscr Q' }}
\ip w, g_{Q}, \gamma _Q \,, 
\\ \label{e.HHQ}
\mathbf H (\mathscr Q') & \coloneqq \sup _{Q_0} \frac {\norm H (Q_0, \mathscr Q') . L ^{2} (w ^{-1} ).} {\sqrt {w (Q_0)}} \,. 
\end{align}
It is a useful remark that in estimating $ \mathbf H (\mathscr Q')$ we 
can restrict the supremum  to cubes $ Q_0\in \mathscr Q'$. 
Of course, we are seeking to prove $ \mathbf H (\mathscr Q) \lesssim \norm w. A_2. $.

\medskip

The first important definition here is 
\begin{equation} \label{e.QnDef}
\mathscr Q_n \coloneqq \Biggl\{Q\in \mathscr Q  \;:\; 
2 ^{n-1}< \frac {w (Q)} {\lvert  Q\rvert } \cdot \frac {w ^{-1} (Q)} {\lvert  Q\rvert } \le 2 ^{n}
\Biggr\}\,. 
\end{equation}
We show that 
\begin{equation}\label{e.Qn}
\mathbf H (\mathscr Q_n) \lesssim 2 ^{n/2} \norm w. A_2. ^{1/2}  \,. 
\end{equation}
Since $ 2^n\le \norm w. A_2.$, this estimate is summable in $ n $ to prove \eqref{e.I33}.

\medskip

Now fix a $ Q_0\in \mathscr Q_n$ for which we are to test the supremum in \eqref{e.HHQ}.  
Let $ \mathscr P_n= \{Q\in \mathscr Q_n \;:\; Q\subset Q_0\}$.  Let $ (\mathscr L_n \;:\; \mathscr P_n (L))$ be a 
corona decomposition of $ \mathscr P_n$ relative to measure $ w $. 
 (The reader is advised to recall the  Definition~\ref{d.linearizing}.)

\medskip 
The essence of the matter is contained in the following Lemma. 
%%%%%%%%%%%%%%%%%%%%%%%%%%%%%% LEMMA LEMMA LEMMA
\begin{lemma}\label{l.essence}  We have these  distributional estimates, uniform over $ L\in \mathscr L_n$: 
\begin{align}\label{e.ess1} 
\Abs{
\bigl\{ 
x \in L \;:\; \lvert  H (L, \mathscr P_n (L)) (x) \rvert > 
K t \tfrac {w (L)} {\lvert  L\rvert }  
\bigr\}} &\lesssim \operatorname e ^{-t} \lvert  L\rvert\,,  
\\ \label{e.ess2}
w ^{-1} \bigl( 
\bigl\{ 
x \in L \;:\; \lvert  H (L, \mathscr P_n (L)) (x) \rvert > 
K t \tfrac {w (L)} {\lvert  L\rvert }  
\bigr\}\bigr) &\lesssim \operatorname e ^{-t}  w ^{-1} (L)\,.   
\end{align}
\end{lemma}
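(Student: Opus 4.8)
The plan is to reduce both estimates to the dyadic John-Nirenberg Lemma~\ref{l.JN}, after normalizing by the average $\rho_L:=w(L)/\lvert L\rvert$. Write $\operatorname S_L f:=\sum_{Q\in\mathscr P_n(L)}\langle f,g_Q\rangle\gamma_Q$, which is again a simple Haar shift of index $\tau$ (just restrict the defining sum to the subcollection $\mathscr P_n(L)$), so that $H(L,\mathscr P_n(L))=\operatorname S_L(w)$. For a large constant $K_0=K_0(\tau,d)$ set $\phi_Q:=(K_0\rho_L)^{-1}\langle w,g_Q\rangle\gamma_Q$ for $Q\in\mathscr P_n(L)$ and $\phi_Q:=0$ otherwise. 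Running Lemma~\ref{l.JN} with index $\tau+1$ in place of $\tau$, conditions (1) and (2) there are immediate: the support and the scale-$2^{-(\tau+1)d}$ constancy of $\phi_Q$ come from \eqref{e.cancellation}, and \eqref{e.size} together with the corona inequality \eqref{e.lin2} give
\[
\lVert\phi_Q\rVert_{\infty}\le (K_0\rho_L)^{-1}\lVert g_Q\rVert_\infty\,w(Q)\,\lVert\gamma_Q\rVert_\infty\le (K_0\rho_L)^{-1}\frac{w(Q)}{\lvert Q\rvert}\le\frac{4}{K_0}\le 1 .
\]

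Next I would check condition (3) of Lemma~\ref{l.JN}, the small-support bound for Lebesgue measure. Fix a dyadic cube $Q$; then $\sum_{Q'\subset Q}\phi_{Q'}=(K_0\rho_L)^{-1}\operatorname S_{L,Q}(w)$ where $\operatorname S_{L,Q}f:=\sum_{Q'\in\mathscr P_n(L),\,Q'\subset Q}\langle f,g_{Q'}\rangle\gamma_{Q'}$, again a simple Haar shift of index $\tau$. It only involves cubes lying inside the maximal elements $Q_i$ of $\{Q'\in\mathscr P_n(L):Q'\subset Q\}$, which are pairwise disjoint and satisfy $w(Q_i)\le 4\rho_L\lvert Q_i\rvert$ by \eqref{e.lin2}; hence by the weak-type bound of Proposition~\ref{p.weakL1},
\[
\Abs{\{\lvert\operatorname S_{L,Q}(w)\rvert>K_0\rho_L\}}\lesssim \frac{2^{\tau d}}{K_0\rho_L}\sum_i w(Q_i)\lesssim\frac{2^{\tau d}}{K_0}\sum_i\lvert Q_i\rvert\le\frac{2^{\tau d}}{K_0}\lvert Q\rvert ,
\]
which is $\le 2^{-(\tau+1)d-1}\lvert Q\rvert$ once $K_0=K_0(\tau,d)$ is chosen large enough. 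Lemma~\ref{l.JN} then yields $\lvert\{\lvert\sum_{Q'\subset L}\phi_{Q'}\rvert>2(\tau+1)t\}\rvert\le(\tau+1)2^{-t+1}\lvert L\rvert$; undoing the normalization and replacing $2^{-t}$ by $e^{-t}$ (which only affects the constant $K$ and the implied constant) gives \eqref{e.ess1}.

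The estimate \eqref{e.ess2} is the main obstacle, since Lemma~\ref{l.JN} is phrased for Lebesgue measure while here $w^{-1}$ appears on both sides. The cleanest route is to deduce \eqref{e.ess2} directly from \eqref{e.ess1}: $w^{-1}$ is again an $A_2$ weight, hence an $A_\infty$ weight, so on $L$ it satisfies a reverse Hölder inequality, whence $w^{-1}(E)\lesssim w^{-1}(L)\bigl(\lvert E\rvert/\lvert L\rvert\bigr)^{\delta}$ for every $E\subset L$ and some $\delta>0$. Applying this with $E=\{x\in L:\lvert H(L,\mathscr P_n(L))(x)\rvert>Kt\rho_L\}$ and inserting \eqref{e.ess1} gives $w^{-1}(E)\lesssim e^{-\delta t}w^{-1}(L)$, which is \eqref{e.ess2} after rescaling $t$. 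The only cost is that $K$ and the implied constants in \eqref{e.ess2} acquire a dependence on $\lVert w\rVert_{A_2}$ (in addition to $\tau,d$); this is harmless, as the corona summation and the sum over $n$ produce powers of $\lVert w\rVert_{A_2}$ anyway. An alternative that avoids the loss is to re-run the stopping-time iteration underlying Lemma~\ref{l.JN} with $w^{-1}$ replacing $\lvert\cdot\rvert$; this needs the weighted analogue of condition (3), namely $w^{-1}(\{\lvert\operatorname S_{L,Q}(w)\rvert>K_0\rho_L\})\le c\,w^{-1}(Q)$ with a fixed $c<1$, and establishing that cleanly is precisely where one again invokes the $A_2$ hypothesis through the Cauchy-Schwarz computation \eqref{e.AzI}, as in the proof of Lemma~\ref{l.carleson}.
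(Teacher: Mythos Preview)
Your argument for the Lebesgue estimate \eqref{e.ess1} is correct and is close to the paper's, though the paper first refines $\mathscr P_n(L)$ into the subcollections $\mathscr P_{n,\alpha}(L)$ of \eqref{e.al}; for \eqref{e.ess1} alone your direct approach using the corona upper bound \eqref{e.lin2} and the weak $(1,1)$ inequality suffices.

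The genuine gap is in \eqref{e.ess2}. Your deduction via $A_\infty$/reverse H\"older gives
\[
w^{-1}(E)\lesssim w^{-1}(L)\Bigl(\tfrac{\lvert E\rvert}{\lvert L\rvert}\Bigr)^{\delta}
\quad\text{with}\quad \delta\simeq \lVert w\rVert_{A_2}^{-1},
\]
so after rescaling the constant $K$ in \eqref{e.ess2} picks up a factor $\simeq\lVert w\rVert_{A_2}$. This is \emph{not} harmless: integrating the distributional estimate yields
$\lVert H_n(L)\rVert_{L^2(w^{-1})}^2\lesssim \lVert w\rVert_{A_2}^{2}\bigl(\tfrac{w(L)}{\lvert L\rvert}\bigr)^{2}w^{-1}(L)$,
which feeds an extra $\lVert w\rVert_{A_2}^{2}$ into the bound for $A$ in \eqref{e.A} and similarly into $B$. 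Tracing this through the remaining summations gives $\mathbf H(\mathscr Q)\lesssim\lVert w\rVert_{A_2}^{2}$ rather than the linear bound $\lVert w\rVert_{A_2}$ that is the point of Theorem~\ref{t.a2}. Your alternative of rerunning the stopping-time argument in $w^{-1}$-measure faces the same obstruction: the passage from a Lebesgue small set to a $w^{-1}$-small set via \eqref{e.AzI} again costs $\lVert w\rVert_{A_2}$, and you give no mechanism to avoid it.

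The paper's proof circumvents this precisely through the extra decomposition \eqref{e.al}. For cubes $Q'\in\mathscr P_{n,\alpha}(L)$ one has simultaneously $\tfrac{w(Q')}{\lvert Q'\rvert}\simeq 2^{-\alpha}\tfrac{w(L)}{\lvert L\rvert}$ and $\tfrac{w(Q')}{\lvert Q'\rvert}\cdot\tfrac{w^{-1}(Q')}{\lvert Q'\rvert}\simeq 2^{n}$, so that $\tfrac{w^{-1}(Q')}{\lvert Q'\rvert}\simeq\rho$ is essentially \emph{constant} across the collection. Since the superlevel set $E_\alpha(t)$ in \eqref{e.<} is a union of such cubes (after passing to the parents $\phi(Q)$ as in \eqref{e.qx}), one converts $\lvert E_\alpha(t)\rvert\lesssim e^{-t}\lvert L\rvert$ directly into $w^{-1}(E_\alpha(t))\lesssim e^{-t}w^{-1}(L)$ with constants depending only on $\tau,d$ and \emph{not} on $\lVert w\rVert_{A_2}$. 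The corona bound \eqref{e.lin2} alone gives only an upper bound on $\tfrac{w(Q)}{\lvert Q\rvert}$, not a lower one, so your single collection $\mathscr P_n(L)$ does not have this comparability of $w^{-1}$ and Lebesgue measure; the $\alpha$-splitting is exactly the missing idea.
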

%%%%%%%%%%%%%%%%%%%%%%%%%%%%%% LEMMA LEMMA LEMMA

Let us complete the proof of our Theorem based upon this Lemma.  
Set $  H _{n} (L) \coloneqq \lvert H(L, \mathscr P_n (L))\rvert$, and estimate 
\begin{align}
\norm H (Q_0, \mathscr Q_n) . L ^{2} (w ^{-1} ). ^2 
& \le 
\NOrm \sum _{L\in \mathscr L_n }   H_n(L) . L ^{2} (w ^{-1} ). ^2 
\\  \label{e.A+B}
& = A+2B= A+ 2\sum _{L\in \mathscr L_n } B (L) \,, 
\\
A& \coloneqq  \sum _{L\in \mathscr L_n } \norm  H_n(L).  L ^{2} (w ^{-1} ). ^2  
\\ \label{e.B(L)}
B (L) & \coloneqq 
\sum _{ \substack{L'\in \mathscr L_n\\ L'\subsetneq L }} \int 
   H_n(L) \cdot H_n(L') \; w ^{-1} \,. 
\end{align}
Note that these estimates show that all cancellation necessary for the truth of theorem is already captured in the corona decomposition.

The estimate of $ A$ is straight forward.  By \eqref{e.ess2},  we see that the 
$ A_2$ estimate reveals itself.  
\begin{align*}
\norm H_n(L). L ^{2} (w ^{-1} ). ^2 & \lesssim    \Bigl[ \frac {w (L)} {  \lvert  L \rvert  } \Bigr] ^2 w ^{-1} (L) 
\\    
& \lesssim w (L) \frac {w (L)} {  \lvert L\rvert } \cdot \frac {w ^{-1}  (L)} {  \lvert L\rvert }
\\
& \lesssim  2 ^{n} w (L) \,. 
\end{align*}
Therefore, by \eqref{e.carleson} 
\begin{align}\label{e.A}
A \lesssim  2 ^{n}\sum _{L\in \mathscr L_n } w (L) \lesssim  2 ^{n} \norm w. A_2. w (Q_0) \,. 
\end{align}

\medskip 

In the expression \eqref{e.B(L)}, the integral is not as complicated as it  immediately appears. 
We have assumed that 
'scales are separated by $ \tau $' at the beginning of this section,  so that as $ L'$ is strictly contained in $ L$, 
we have  for any $ Q\in \mathscr P_n(L)$, that $ (L') ^{(\tau )} $ is either contained in $ Q$ or disjoint from it. 
It follows that $ H_n(L) $ takes a single value on all of $ L'$, 
which we denote by $  H_n(L;L')$.  This observation 
simplifies our task of estimating the integral.  

For $ L' \subsetneq L $ we use \eqref{e.ess2} and \eqref{e.QnDef} to see that 
\begin{align}
\int 
   H_n(L) \cdot H_n(L') \; w ^{-1}
&\lesssim 
   H_n(L;L')  \frac {w (L')} {  \lvert L'\rvert } \cdot w ^{-1} (L') 
\\&  \label{e.gt}
\lesssim 2 ^{n}   H_n(L;L')  \cdot   \lvert L'\rvert \,.  
\end{align}
Note that the $ A_2$ characteristic has entered in.   And the presence of $   \lvert L'\rvert $ indicates 
that there is an integral against Lebesgue measure here.

Employ this observation with Cauchy-Schwartz, \emph{both} distributional estimates \eqref{e.ess1} 
and \eqref{e.ess2}  as well as  \eqref{e.1/4}  to estimate 
\begin{align}\label{e.zj}
B (L) & \coloneqq 
\sum _{\substack{ L' \in \mathscr L_n  \\ L'\subsetneq L }} 
\int 
   H_n(L) \cdot H_n(L') \; w ^{-1}
\\
& \lesssim 
2 ^{n}
   H_n(L;L')   
\sum _{\substack{ L' \in \mathscr L_n  \\L'\subsetneq  L }} 
  \lvert  L'\rvert    & (\textup{by \eqref{e.gt}})
\\
& = 2 ^{n} 
\int    H_n(L;L')  \cdot \sum _{\substack{ L' \in \mathscr L_n  \\ L'\subset L }}  \mathbf 1_{L'} \; dx 
& (\textup{by defn.})
\\
& \le 2 ^{n} \norm  H_n(L). L ^{2} (dx). \NOrm \sum _{\substack{ L' \in \mathscr L_n  \\ L'\subset L }}
\mathbf 1_{L'} 
. L ^{2} (dx).   & (\textup{Cauchy-Schwartz})
\\
& \lesssim 2 ^{n}w (L) \,. & (\textup{by \eqref{e.ess1} and \eqref{e.1/4}}) 
\end{align}
Therefore, by \eqref{e.carleson} again, 
\begin{align*}
B  & \lesssim 2 ^{n} \sum _{L\in \mathscr L_n} w (L) \lesssim 2 ^{n} \norm w. A_2.  w (Q_0) \,. 
\end{align*}
Combining this estimate with \eqref{e.A+B} and \eqref{e.A} completes the proof of \eqref{e.Qn}, and so our 
Theorem, assuming Lemma~\ref{l.essence}. 

%%%%%%%%%%%%%%%%%%%%%%%%%%%%%% SECTION  SECTION SECTION
%%%%%%%%%%%%%%%%%%%%%%%%%%%%%% SECTION  SECTION SECTION 
\section{The essence of the matter.} %\label{s.}

We prove Lemma~\ref{l.essence}. 
In this situation, both a cube $ Q_0$ and cube $ L\in \mathscr L_n$ are given.  
It is an important point that all the relevant 
cubes that we sum over are in the collection $ \mathscr Q_n$, as defined in \eqref{e.QnDef}.

One more class of dyadic cubes are needed.  
For integers $ \alpha \ge0$ define $ \mathscr P_{n,\alpha}(L) $ to be those $ Q\in \mathscr P_n (L)$  
such that 
\begin{equation}\label{e.al}
 2 ^{-\alpha +1} \frac {w (L)} { \lvert  L\rvert } \le 
 \frac {w (Q)} {\lvert  Q\rvert } < 2 ^{-\alpha +2 } \frac {w (L)} { \lvert  L\rvert } \,. 
\end{equation}

The essential observation is this: By Proposition~\ref{p.weakL1},
$ \operatorname T $ maps $ L ^{1} (dx)$ into weak-$ L ^{1} (dx)$, 
with norm depending only on the index $ \tau $ of the operator.  Hence, 
\begin{equation}
\NOrm \sum _{\substack{Q \subset Q_1\\ Q\in \mathscr P_{n,\alpha}(L)} }
\ip w, g_Q, \gamma _Q . L ^{1,\infty } (dx).  \lesssim w (Q_1) \,. 
\end{equation}
This is a uniform statement in $ Q_1$.  If in addition $ Q_1\in \mathscr P_{n,\alpha}(L)$, we have 
\begin{equation} \label{e.weak}
\NOrm \sum _{\substack {Q \in Q_1\\ Q\in \mathscr P_{n,\alpha}(L)  }} 
\ip w, g_Q, \gamma _Q . L ^{1,\infty } (dx).  \lesssim  2 ^{-\alpha } \frac {w (L)} {\lvert  L\rvert } \cdot \lvert  Q_1\rvert  \,. 
\end{equation}    
Due to the functions $ g_Q$ and $ \gamma _Q$ are supported on $ Q$, we see that this estimate also holds 
uniformly in $ Q_1$.  

Note that we have by the definition of  Haar functions Definition~\ref{d.haar}, and 
a simple Haar shift, Definition~\ref{d.simple}, 
\begin{equation}   \label{e.7single}
\lvert \ip w, g_Q, \gamma _Q (x) \rvert 
\le \frac {w (Q)} {\lvert  Q\rvert } \lesssim 2 ^{-\alpha } \frac {w (L)} {\lvert  L\rvert }\,. 
\end{equation}
The point of these observations is that 
Lemma~\ref{l.JN} applies.  Define 
\begin{equation} \label{e.<}
E _{\alpha }(t) \coloneqq  \Biggl\{  x \in L \;:\; 
\Biggl\lvert 
\sum _{\substack{  Q\in \mathscr P_{n,\alpha}(L)  }} \ip w, g_Q, \gamma _Q (x) 
\Biggr\rvert
>  K t  2 ^{-\alpha }\frac {w (L)} {\lvert  L\rvert } \Biggr\}\,, \qquad t\ge 1 \,.  
\end{equation}
We have the exponential inequality 
$
\lvert  E _{\alpha } (t)\rvert \lesssim  \operatorname e ^{-t}   \lvert L\rvert  
$
for an appropriate choice of constant $ K$ in \eqref{e.<}.  (The choice of $ K$ is dictated only by the 
exact constants that enter into \eqref{e.weak} and \eqref{e.7single} as well as the parameter $ \tau $ 
associated with the simple Haar shift.)

This is one of our two claims, the distributional estimate in Lebesgue measure \eqref{e.ess1}, for the collection
$ \mathscr P_{n,\alpha}(L)$, not 
the collection $ \mathscr P_n (L)$.  But with the term $ 2 ^{-\alpha } $  appearing in \eqref{e.<}, it is 
easy to supply \eqref{e.ess1} as written.  Indeed, for $ K' = K \sum _{\alpha } 2 ^{-\alpha /2}$, and 
$ K$ as in \eqref{e.<}, we can estimate 
\begin{align*}
\Biggl\lvert 
\Biggl\{  x \in L \;:\; 
\Biggl\lvert 
\sum _{\substack{  Q\in \mathscr P_{n}(L)  }} \ip w, g_Q, \gamma _Q (x) 
\Biggr\rvert
>  K' t  \frac {w (L)} {\lvert  L\rvert } \Biggr\}
\Biggr\rvert 
& \le \sum _{\alpha =0} ^{\infty } \lvert  E _{\alpha } (t 2 ^{\alpha/2})\rvert \lesssim e ^{-t} \lvert  L\rvert.   
\end{align*}
\medskip 

We want the corresponding inequality in $ w ^{-1} $-measure.  But note that $ E _{\alpha }(t)$ is 
a union of disjoint dyadic cubes  in a collection $  \mathscr E _{\alpha } (t)$,
where for each $ Q\in \mathscr E  _{\alpha }(t)$, 
we can choose dyadic $ \phi (Q)\in  \mathscr P_{n,\alpha}(L)$ 
with  $ Q\subset \phi (Q)$, and 
$ \lvert  Q\rvert\ge 2 ^{-\tau d} \lvert  \phi (Q)\rvert  $. This follows from  the definition of a simple Haar shift.  
It follows that we have 
\begin{equation} \label{e.qx} 
\Abs { \textstyle\bigcup \{\phi (Q) \;:\; Q \in \mathscr E _{\alpha } (t)\} } 
\lesssim \operatorname e ^{-t} \lvert  L\rvert  \,. 
\end{equation}
(Recall that there is a similar difficulty in Proposition~\ref{p.weakL1}.) 
The point of these considerations is this:  For each $ Q'\in \mathscr P_{n,\alpha}(L)$, we have 
both the equivalences \eqref{e.QnDef} and \eqref{e.al}.  Hence,  $ w ^{-1} (Q') \simeq \rho \lvert  Q'\rvert $ 
where $ \rho $ is a fixed quantity.  (It depends upon $ L$, and we can 
we can compute it, but as it appears on both sides of the distributional inequality, its value 
is irrelevant to our conclusion.)
We can conclude from \eqref{e.qx} 
the same inequality in $ w ^{-1} $-measure by the following argument.  Let $ \mathscr E _{\alpha }  ^{\ast} (t)$ 
\begin{align*}
w ^{-1} \Biggl\{ 
\Biggl\lvert 
\sum _{Q \in \mathscr P_{n,\alpha } (L)} \ip w , g_Q, \gamma _Q 
\Biggr\rvert
>  K t 2 ^{-\alpha } 
\frac {w (L)} {\lvert  L\rvert }\Biggr\} 
& \le 
w ^{-1} \bigl({ \textstyle\bigcup \{\phi (Q) \;:\; Q \in \mathscr E _{\alpha } (t)\} } \bigr) 
\\
& = 
\sum _{Q\in \mathscr E _{\alpha }  ^{\ast} (t)} w ^{-1} (\phi (Q))
\\
& \simeq \rho 
\sum _{Q\in \mathscr E _{\alpha }  ^{\ast} (t)} 
\lvert  \phi (Q) \rvert 
\\
& \lesssim 
\rho 
\Abs { \textstyle\bigcup \{\phi (Q) \;:\; Q \in \mathscr E _{\alpha } (t)\} } 
\\
&  \lesssim \rho \operatorname e ^{-t}  \lvert  L\rvert  \simeq e ^{-t} w ^{-1} (L)
\,. 
\end{align*}
  This  \eqref{e.ess2},  except for the occurence  of the $ 2 ^{-\alpha }$ on the right, 
  and so the proof is complete.

%%%%%%%%%%%%%%%%%%%%%%%%%%%%%% SECTION  SECTION SECTION
%%%%%%%%%%%%%%%%%%%%%%%%%%%%%% SECTION  SECTION SECTION 
\section{Sufficient Conditions for a Two Weight Inequality} %\label{s.}

There are a great many sufficient conditions for a two-weight inequality.  To these 
results, let us add this statement, for it's elegance. (It is probably  already known.)

%%%%%%%%%%%%%%%%%%%%%%%%%%%%%% THEOREM THEOREM THEOREM
\begin{theorem}\label{t.suff}  Let $ \alpha , \beta $ be positive functions on $ \mathbb R ^{d}$. 
For the inequality below to hold for all Haar shift operators $\operatorname T $ 
\begin{equation} \label{e.T2}
\norm \operatorname T (f \alpha ). L ^{2} (\beta ).  \lesssim \norm f. L ^{2} (\alpha ).  
\end{equation}
It is sufficient that $ \alpha ,\beta \in A _{\infty }$ and the following `two-weight $ A_2$' hold:
\begin{equation}\label{e.2A2}
\sup _{Q} \frac {\alpha (Q)} {\lvert  Q\rvert } \cdot 
\frac{\beta  (Q)} {\lvert  Q\rvert } < \infty \,. 
\end{equation}
\end{theorem}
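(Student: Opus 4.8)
The plan is to run exactly the same argument that proves Theorem~\ref{t.a2}, keeping track of which quantitative features of the $A_2$ weight $w$ were actually used. Inspection of Sections~\ref{s.initial}--\ref{s.main} shows that only two properties of $w$ entered: first, the \emph{quantitative $A_\infty$ estimate} \eqref{e.AzI} (used to prove the Carleson estimate \eqref{e.carleson}), which in turn only needs $w\in A_\infty$ together with the two-weight bound \eqref{e.2A2}; and second, the pointwise comparison $w^{-1}(Q')\simeq\rho\,|Q'|$ on the cubes of $\mathscr P_{n,\alpha}(L)$, which transferred the Lebesgue-measure distributional estimate \eqref{e.ess1} to the $w^{-1}$-measure estimate \eqref{e.ess2}. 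So I would first \emph{restate the structural lemmas} with $(\alpha,\beta)$ in place of $(w,w^{-1})$: define a $\beta$-corona decomposition as in Definition~\ref{d.linearizing} with $\mu=\beta$, and prove the analogue of Lemma~\ref{l.carleson}, namely $\sum_{L\in\mathscr L,\,L\subset Q}\beta(L)\lesssim\beta(Q)$, using that $\beta\in A_\infty$ (the inequality \eqref{e.AzI} becomes $|E|/|L|\lesssim(\beta(E)/\beta(L))^{c}$ for some $c>0$, which is precisely a quantitative $A_\infty$ statement for $\beta$; one does not even need \eqref{e.2A2} here, only $\beta\in A_\infty$).

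Next I would set up the scale decomposition. Exactly as in \eqref{e.QnDef}, put
\begin{equation*}
\mathscr Q_n\coloneqq\Bigl\{Q\in\mathscr Q\;:\;2^{n-1}<\tfrac{\alpha(Q)}{|Q|}\cdot\tfrac{\beta(Q)}{|Q|}\le 2^{n}\Bigr\}\,,
\end{equation*}
which is now a \emph{finite} range of $n$: by \eqref{e.2A2} we have $n\le N_0$ for a fixed $N_0$, and this finiteness replaces the summation $\sum_n 2^n\lesssim\|w\|_{A_2}$ used after \eqref{e.Qn} — here the sum over the finitely many relevant $n$ is simply $O(1)$. With this in hand the quantities $H(Q_0,\mathscr Q')$ and $\mathbf H(\mathscr Q')$ are defined verbatim as in \eqref{e.HQ}--\eqref{e.HHQ}, and the goal \eqref{e.I33} becomes $\mathbf H(\mathscr Q)\lesssim 1$, reduced as before to $\mathbf H(\mathscr Q_n)\lesssim 2^{n/2}$.

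Then I would reprove Lemma~\ref{l.essence} for $(\alpha,\beta)$. The Lebesgue-measure estimate \eqref{e.ess1} used only Proposition~\ref{p.weakL1} (the weak $L^1(dx)$ bound for simple Haar shifts, which involves no weight at all) together with the John--Nirenberg Lemma~\ref{l.JN}, via the subcollections $\mathscr P_{n,\alpha}(L)$ defined through the \emph{$\beta$-density} condition \eqref{e.al}; that carries over with no change. For the $\beta$-measure version \eqref{e.ess2}, the key point is that on $\mathscr P_{n,\alpha}(L)$ one has both $\alpha(Q')\beta(Q')\simeq 2^n|Q'|^2$ (membership in $\mathscr Q_n$) and $\alpha(Q')/|Q'|\simeq 2^{-\alpha}\alpha(L)/|L|$ (condition \eqref{e.al}), hence $\beta(Q')\simeq\rho\,|Q'|$ with $\rho$ fixed (depending on $L$, $n$, $\alpha$), so the passage from \eqref{e.qx} to the $\beta$-measure bound goes through identically. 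Finally the endgame \eqref{e.A+B}--\eqref{e.zj} is purely formal once Lemma~\ref{l.essence} and the Carleson estimate are available, so \eqref{e.I3} follows (with $\|w\|_{A_2}$ replaced by the supremum in \eqref{e.2A2}), \eqref{e.I2} is deduced from it as in \S\ref{s.main}, and Theorem~\ref{t.ntv} then yields \eqref{e.T2}.

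The main obstacle — really the only place needing genuine care rather than transcription — is verifying that the \emph{quantitative} $A_\infty$ hypothesis is exactly what Lemma~\ref{l.carleson} needs: I must check that ``$\beta\in A_\infty$'' in the sense used here supplies an inequality of the form $|E|/|L|\le C(\beta(E)/\beta(L))^{c}$ with $c\in(0,1]$ uniform in $L$ and $E\subset L$, and that the corona stopping constant $4$ in Definition~\ref{d.linearizing} can be enlarged to a constant $\Lambda=\Lambda(\beta)$ so that the analogue of \eqref{e.11/4} still forces $|L\setminus E|<\tfrac14|L|$; a stopping time with ratio $\Lambda$ large enough (depending on the $A_\infty$ constant of $\beta$) does this. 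Everything else is a mechanical substitution $w\rightsquigarrow\alpha$, $w^{-1}\rightsquigarrow\beta$, $\|w\|_{A_2}\rightsquigarrow$ the finite quantity in \eqref{e.2A2}, with the infinite sum over scales replaced by a finite one.
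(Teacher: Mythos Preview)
Your approach is precisely what the paper intends --- it says explicitly that the proof ``is a modification of what we have already presented, so we do not give the details'' --- and your outline is largely on target. A few points need attention, however.

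The corona decomposition and the Carleson estimate should be taken with respect to $\alpha$, not $\beta$. In the original argument the corona is built from the $w$-density (see the paragraph before Lemma~\ref{l.essence} and condition \eqref{e.al}), and $w$ corresponds to $\alpha$ under your substitution $w\rightsquigarrow\alpha$, $w^{-1}\rightsquigarrow\beta$. Your later formula ``$\alpha(Q')/|Q'|\simeq 2^{-\alpha}\alpha(L)/|L|$'' silently corrects this, so the slip is largely cosmetic --- but beware the notational collision between the weight $\alpha$ and the integer index $\alpha$ appearing in $\mathscr P_{n,\alpha}(L)$.

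More substantively, you must verify \emph{both} testing conditions \eqref{e.ntv3} and \eqref{e.ntv4}. In \S\ref{s.main} the paper reduces to one of them by invoking ``the symmetry of the $A_2$ condition,'' and similarly the derivation of \eqref{e.I2} from \eqref{e.I3} uses the freedom to ``pass to the dual operator and replace $w$ by $w^{-1}$.'' That symmetry is unavailable in the genuine two-weight setting, so the entire argument must be run a second time with the roles of $\alpha$ and $\beta$ interchanged; this is exactly why \emph{both} $A_\infty$ hypotheses are needed, one for each direction. Your outline only establishes one $T1$ condition.

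Finally, the range of $n$ in $\mathscr Q_n$ is not finite: unlike the $A_2$ case, where Cauchy--Schwarz forces $w(Q)w^{-1}(Q)/|Q|^2\ge1$, here $\alpha(Q)\beta(Q)/|Q|^2$ may be arbitrarily small, so $n$ ranges over $(-\infty,N_0]$. The geometric series $\sum_{n\le N_0}2^{n/2}\simeq 2^{N_0/2}$ still converges, so this is harmless once noted.
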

%%%%%%%%%%%%%%%%%%%%%%%%%%%%%% THEOREM THEOREM THEOREM

Of course these conditions are not necessary, for example one can take $ \alpha = \beta = \mathbf 1_{E}$, 
for any measurable subset $ E$ of $ \mathbb R ^{d}$.  
By $ \alpha \in A _{\infty }$ we mean the measures $ \alpha $ and $ \beta $ satisfy a variant of the 
estimate in \eqref{e.AzI}.  
%%%%%%%%%%%%%%%%%%%%%%%%%%%%%%  DEFINITION DEFINITION DEFINITION
\begin{definition}\label{d.Ainfty} We say that measure $ \alpha \in A _{\infty }$ if 
this condition holds.  For all $ 0<\epsilon <1$ there is a $ 0< \eta <1$ so that for all cubes $ Q$ 
and sets $ E\subset Q$ with $ \lvert  E\rvert< \epsilon \lvert  Q\rvert  $, then $ \alpha (E)< \beta \alpha (Q)$. 
\end{definition}
%%%%%%%%%%%%%%%%%%%%%%%%%%%%%%  DEFINITION DEFINITION DEFINITION

The proof is a modification of what we have already presented, so we do not give the details. 
The resulting estimate is however sharp in the dependence upon the two weight $ A_2$ constant, and the $ A _{\infty }$ 
constants.

\begin{bibsection}
\begin{biblist}

\bib{MR1934198}{article}{
   author={Auscher, P.},
   author={Hofmann, S.},
   author={Muscalu, C.},
   author={Tao, T.},
   author={Thiele, C.},
   title={Carleson measures, trees, extrapolation, and $T(b)$ theorems},
   journal={Publ. Mat.},
   volume={46},
   date={2002},
   number={2},
   pages={257--325},
   issn={0214-1493},
   review={\MR{1934198 (2003f:42019)}},
}

\bib{MR2433959}{article}{
   author={Beznosova, Oleksandra V.},
   title={Linear bound for the dyadic paraproduct on weighted Lebesgue space
   $L\sb 2(w)$},
   journal={J. Funct. Anal.},
   volume={255},
   date={2008},
   number={4},
   pages={994--1007},
   issn={0022-1236},
   review={\MR{2433959}},
}

\bib{MR1124164}{article}{
   author={Buckley, Stephen M.},
   title={Estimates for operator norms on weighted spaces and reverse Jensen
   inequalities},
   journal={Trans. Amer. Math. Soc.},
   volume={340},
   date={1993},
   number={1},
   pages={253--272},
   issn={0002-9947},
   review={\MR{1124164 (94a:42011)}},
}

\bib{MR1251061}{book}{
   author={David, Guy},
   author={Semmes, Stephen},
   title={Analysis of and on uniformly rectifiable sets},
   series={Mathematical Surveys and Monographs},
   volume={38},
   publisher={American Mathematical Society},
   place={Providence, RI},
   date={1993},
   pages={xii+356},
   isbn={0-8218-1537-7},
   review={\MR{1251061 (94i:28003)}},
}

\bib{MR1113517}{article}{
   author={David, G.},
   author={Semmes, S.},
   title={Singular integrals and rectifiable sets in ${\bf R}\sp n$: Beyond
   Lipschitz graphs},
   language={English, with French summary},
   journal={Ast\'erisque},
   number={193},
   date={1991},
   pages={152},
   issn={0303-1179},
   review={\MR{1113517 (92j:42016)}},
}

\bib{MR2140200}{article}{
   author={Dragi{\v{c}}evi{\'c}, Oliver},
   author={Grafakos, Loukas},
   author={Pereyra, Mar{\'{\i}}a Cristina},
   author={Petermichl, Stefanie},
   title={Extrapolation and sharp norm estimates for classical operators on
   weighted Lebesgue spaces},
   journal={Publ. Mat.},
   volume={49},
   date={2005},
   number={1},
   pages={73--91},
   issn={0214-1493},
   review={\MR{2140200 (2006d:42019)}},
}

\bib{MR1992955}{article}{
   author={Dragi{\v{c}}evi{\'c}, Oliver},
   author={Volberg, Alexander},
   title={Sharp estimate of the Ahlfors-Beurling operator via averaging
   martingale transforms},
   journal={Michigan Math. J.},
   volume={51},
   date={2003},
   number={2},
   pages={415--435},
   issn={0026-2285},
   review={\MR{1992955 (2004c:42030)}},
}

% \bib{MR1115188}{article}{
%    author={Fujii, Nobuhiko},
%    title={A condition for a two-weight norm inequality for singular integral
%    operators},
%    journal={Studia Math.},
%    volume={98},
%    date={1991},
%    number={3},
%    pages={175--190},
%    issn={0039-3223},
%    review={\MR{1115188 (92k:42022)}},
% }

\bib{MR1771755}{article}{
   author={Hukovic, S.},
   author={Treil, S.},
   author={Volberg, A.},
   title={The Bellman functions and sharp weighted inequalities for square
   functions},
   conference={
      title={Complex analysis, operators, and related topics},
   },
   book={
      series={Oper. Theory Adv. Appl.},
      volume={113},
      publisher={Birkh\"auser},
      place={Basel},
   },
   date={2000},
   pages={97--113},
   review={\MR{1771755 (2001j:42012)}},
}

% \bib{0805.0246}{article}{
%   author={Lacey, Michael T.},
%     author={Sawyer, Eric T.},
%       author={Uriarte-Tuero, Ignacio},
%       title={A characterization of two weight norm inequalities for maximal singular integrals},
%       date={2008},
%       journal={Submitted to JAMS.},
%       eprint={arXiv.org:0805.0246},
%       }  
% 
% 
% \bib{MR2200743}{article}{
%    author={Lerner, Andrei K.},
%    title={On some sharp weighted norm inequalities},
%    journal={J. Funct. Anal.},
%    volume={232},
%    date={2006},
%    number={2},
%    pages={477--494},
%    issn={0022-1236},
%    review={\MR{2200743 (2006h:42023)}},
% }
% 
% 
% 
% 
% \bib{MR2427454}{article}{
%    author={Lerner, Andrei K.},
%    author={Ombrosi, Sheldy},
%    author={P{\'e}rez, Carlos},
%    title={Sharp $A\sb 1$ bounds for Calder\'on-Zygmund operators and the
%    relationship with a problem of Muckenhoupt and Wheeden},
%    journal={Int. Math. Res. Not. IMRN},
%    date={2008},
%    number={6},
%    pages={Art. ID rnm161, 11},
%    issn={1073-7928},
%    review={\MR{2427454}},
% }

\bib{MR0447956}{article}{
   author={Muckenhoupt, Benjamin},
   author={Wheeden, Richard L.},
   title={Some weighted weak-type inequalities for the Hardy-Littlewood
   maximal function and the Hilbert transform},
   journal={Indiana Univ. Math. J.},
   volume={26},
   date={1977},
   number={5},
   pages={801--816},
   issn={0022-2518},
   review={\MR{0447956 (56 \#6266)}},
}

\bib{MR1685781}{article}{
   author={Nazarov, F.},
   author={Treil, S.},
   author={Volberg, A.},
   title={The Bellman functions and two-weight inequalities for Haar
   multipliers},
   journal={J. Amer. Math. Soc.},
   volume={12},
   date={1999},
   number={4},
   pages={909--928},
   issn={0894-0347},
   review={\MR{1685781 (2000k:42009)}},
}

\bib{MR2407233}{article}{
   author={Nazarov, F.},
   author={Treil, S.},
   author={Volberg, A.},
   title={Two weight inequalities for individual Haar multipliers and other
   well localized operators},
   journal={Math. Res. Lett.},
   volume={15},
   date={2008},
   number={3},
   pages={583--597},
   issn={1073-2780},
   review={\MR{2407233}},
}

\bib{MR1756958}{article}{
   author={Petermichl, Stefanie},
   title={Dyadic shifts and a logarithmic estimate for Hankel operators with
   matrix symbol},
   language={English, with English and French summaries},
   journal={C. R. Acad. Sci. Paris S\'er. I Math.},
   volume={330},
   date={2000},
   number={6},
   pages={455--460},
   issn={0764-4442},
   review={\MR{1756958 (2000m:42016)}},
}

\bib{MR2354322}{article}{
   author={Petermichl, S.},
   title={The sharp bound for the Hilbert transform on weighted Lebesgue
   spaces in terms of the classical $A\sb p$ characteristic},
   journal={Amer. J. Math.},
   volume={129},
   date={2007},
   number={5},
   pages={1355--1375},
   issn={0002-9327},
   review={\MR{2354322 (2008k:42066)}},
}

\bib{MR2367098}{article}{
   author={Petermichl, Stefanie},
   title={The sharp weighted bound for the Riesz transforms},
   journal={Proc. Amer. Math. Soc.},
   volume={136},
   date={2008},
   number={4},
   pages={1237--1249},
   issn={0002-9939},
   review={\MR{2367098 (2009c:42034)}},
}

\bib{MR1964822}{article}{
   author={Petermichl, S.},
   author={Treil, S.},
   author={Volberg, A.},
   title={Why the Riesz transforms are averages of the dyadic shifts?},
   booktitle={Proceedings of the 6th International Conference on Harmonic
   Analysis and Partial Differential Equations (El Escorial, 2000)},
   journal={Publ. Mat.},
   date={2002},
   number={Vol. Extra},
   pages={209--228},
   issn={0214-1493},
   review={\MR{1964822 (2003m:42028)}},
}

\bib{MR1894362}{article}{
   author={Petermichl, Stefanie},
   author={Volberg, Alexander},
   title={Heating of the Ahlfors-Beurling operator: weakly quasiregular maps
   on the plane are quasiregular},
   journal={Duke Math. J.},
   volume={112},
   date={2002},
   number={2},
   pages={281--305},
   issn={0012-7094},
   review={\MR{1894362 (2003d:42025)}},
}

\bib{MR2179730}{article}{
   author={Tolsa, Xavier},
   title={Bilipschitz maps, analytic capacity, and the Cauchy integral},
   journal={Ann. of Math. (2)},
   volume={162},
   date={2005},
   number={3},
   pages={1243--1304},
   issn={0003-486X},
   review={\MR{2179730 (2006g:30033)}},
}

\bib{MR1748283}{article}{
   author={Wittwer, Janine},
   title={A sharp estimate on the norm of the martingale transform},
   journal={Math. Res. Lett.},
   volume={7},
   date={2000},
   number={1},
   pages={1--12},
   issn={1073-2780},
   review={\MR{1748283 (2001e:42022)}},
}

\bib{MR1897458}{article}{
   author={Wittwer, Janine},
   title={A sharp estimate on the norm of the continuous square function},
   journal={Proc. Amer. Math. Soc.},
   volume={130},
   date={2002},
   number={8},
   pages={2335--2342 (electronic)},
   issn={0002-9939},
   review={\MR{1897458 (2003j:42009)}},
}

\end{biblist}
\end{bibsection}

\end{document}